\documentclass[10pt,reqno,english]{smfart}
\usepackage{amscd,amsfonts,amsmath,amssymb,amstext,amsthm}
\usepackage{mathpazo}
\usepackage{hyperref}
\newtheorem{theorem}{Theorem}[section]
\newtheorem{lemma}[theorem]{Lemma}
\newtheorem{proposition}[theorem]{Proposition}
\newtheorem{corollary}[theorem]{Corollary}
\newtheorem{mainthm}[]{Theorem}
\theoremstyle{remark}
\newtheorem{remark}[theorem]{Remark}
\newtheorem*{remarks}{Remarks}

\newtheorem{definition}[theorem]{Definition}
\title[K3-surfaces with special symmetry]
{K3-surfaces with special symmetry:\\
An example of classification by Mori-reduction}
\author{Kristina Frantzen and Alan Huckleberry
  \thanks{First
    author's research supported by grants from the Studienstiftung des
    deutschen Volkes and the Deutsche Forschungsgemeinschaft.\\
    Second author's research partially supported by grants from the Deutsche
    Forschungsgemeinschaft.}
}
\address{Institut und Fakult\"at f\"ur Mathematik\\Ruhr-Universit\"at Bochum\\Germany\\kristina.frantzen@ruhr-uni-bochum.de\\ ahuck@cplx.ruhr-uni-bochum.de}
\date {February 18, 2008}
\begin{document}
\maketitle
%
%
%
%
\section {Introduction} \label {results}
An interesting class of K3-surfaces consists of those
surfaces $X$ equipped with an antisymplectic
holomorphic automorphism $\sigma :X\to X$ of order two.  Recently there 
has been substantial progress in understanding these
manifolds, in particular as desingularized 2:1 ramified covers of 
log del Pezzo surfaces (\cite {Na}), and analytic phenomena
related to their moduli (\cite {Y1,Y2}). Here we
present an approach for studying such surfaces from the
point of view of symmetry. This amounts to analyzing
the action of a centralizer $H$ of $\sigma $ in the group of holomorphic
symplectic automorphisms of the K3-surface $X$ and the $H$-equivariant Mori-reduction 
of the quotient $X/\sigma $.

After presenting methods which apply in general, we turn  
to a special case where $H$ is the nontrivial semidirect product
$C_3\ltimes C_7$ of cyclic groups of order three and seven. 
This arose naturally in our consideration
of maximal groups of symplectic transformations, exemplifies
the general approach but requires
only minimal technical work.  Furthermore,
the results in this case shed new light on the classification of K3-surfaces
with an action of the group $L_2(7)$, which
has been studied extensively in \cite {OZ}. (See \cite {Z} for
a summary of other recent works in this direction.) Analogous 
classification results can be proved whenever $H$ 
is of sufficiently high order or has sufficiently complicated group structure 
(see \cite {FH}).
\subsection{Notation}\label{notation}
The general problem can be formulated as follows. Let $H$ be an abstract 
finite group, $A:= C_2= <\sigma>$ and $G = A\times H$. An 
effective holomorphic $G$-action on a K3-surface $X$ is defined
by an injective homomorphism $\alpha: G \to \mathrm{Aut}(X)$. It is
assumed that $\alpha(\sigma)$ is antisymplectic and 
$\alpha(H) \subset  \mathrm{Aut}_\text{sym}(X)$ is a group of symplectic
transformations of $X$.  Abusing notation, if the context is clear,
we do not differentiate between the abstract group or group elements
and their $\alpha $-images in $\mathrm {Aut}(X)$, e.g., 
writing $H$ for $\alpha (H)$. The set $\{(X,\alpha)\}$ of all $G$-actions on
K3-surfaces is denoted by $\mathcal{A}$.

We wish to describe (up to natural equivalence) 
the K3-surfaces which are equipped with
such actions. In precise terms, we regard actions $(X_1,\alpha _1)$ and 
$(X_2,\alpha _2)$ as being equivalent whenever there exists a 
biholomorphic map $\varphi :X_1\to X_2$ and a group automorphism $\psi \in
\mathrm{Aut}(G)$ such that 
$\alpha _2(g)(\varphi (x))=\varphi (\alpha_1(\psi(g))(x))$ for all $g\in G$
and $x\in X$.  The goal is then to describe the quotient
$\mathcal M=\mathcal A/\hspace{-1mm}\sim $ of all such actions by this equivalence
relation.
\subsection{Statement of results}
The fixed point set $\mathrm {Fix}_X(\sigma )$ of an antisymplectic 
automorphism on a K3-surface $X$ is a disjoint union of smooth curves. Unless
this set is empty, the quotient $Y:=X/\sigma $ is a smooth 
rational surface.  Operating in the setting described above, we
have the canonically induced $H$-action on $Y$.  It is then
possible to apply equivariant Mori-reduction to obtain an
$H$-equivariant map
$Y\to Y_{\mathrm {min}}$ to an $H$-minimal model which either
has an ample anticanonical bundle or is an equivariant fiber
space over $\mathbb P_1$ with general fiber $\mathbb{P}_1$. If $Z$ is
a surface with $-K_Z$ ample and $K_Z^2 =d$, then $1 \leq d \leq 9$, and
one refers to $Z$ as a del Pezzo surface of degree $d$. Such a
surface is either the blow up of $\mathbb{P}_2$ in $9-d$ points in
general position or $\mathbb{P}_1 \times \mathbb{P}_1$.

In \S 2, we study the exceptional curves of the equivariant Mori
reduction $Y \to Y_{\mathrm{min}}$, in particular their 
intersection with the branch locus of the
quotient map $\pi: X \to X/\sigma = Y$. We exemplify our 
method by classifying $Y_\text{min}$ and $Y$ for $H$ being the unique nontrivial semidirect product
$ C_3\ltimes C_7$ defined by the
action of $ C_3$ on $ C_7$ which is given by its embedding
in $ C_6\cong \mathrm {Aut}(C_7)$.  Having done so, the classification problem for $X$ reduces to a study of $H$-invariant sextics in $\mathbb{P}_2$ and
we are able to describe the space $\mathcal M$ by 
direct calculation (see Section \ref{computation}).

Up to $\mathrm {SL}_3(\mathbb C)$-conjugation there are two effective
actions of $H$ on $\mathbb P_2$. These are equivalent in the above sense as they differ by an outer
automorphism of $H$.
A double cover of $\mathbb{P}_2$ branched along a smooth $H$-invariant curve of degree six is a K3-surface with an action of $H$.
Those branch curves which define K3-surfaces with a
symplectic action of $H$ are defined by $H$-invariant polynomials. 
Choosing coordinates,
the space $\mathbb C[z_0,z_1,z_2]^H_{(6)}$ of $H$-invariant polynomials
of degree six is
$$
V:=\mathrm {Span}\{z_0^2z_1^2z_2^2,\, z_0^5z_1+z_2^5z_0+z_1^5z_2\}\,.
$$
The family $\mathbb{P}(V)$ of curves defined by polynomials in $V$
contains exactly four singular curves. These are the curve defined by
$z_0^2z_1^2z_2^2$ and those defined by 
$3z_0^2z_1^2z_2^2 -\zeta ^k(z_0^5z_1+z_2^5z_0+z_1^5z_2)$, where
$\zeta $ is a nontrivial cube root of unity, $k=1,2,3$.

To define the equivalence relation which then yields a description
of $\mathcal M$, we let $\Gamma$ be the cyclic group of order
three generated by the transformation 
$[z_0:z_1:z_2]\mapsto [z_0:\zeta z_1: \zeta ^2z_2]$, where $\zeta $
is as above. This group acts on the space $\mathbb P(V)$ of
$H$-invariant curves and one shows that two such curves are
equivalent by means of an automorphism of $\mathbb P_2$
if and only if they lie in the same $\Gamma$-orbit. In particular, the three irreducible singular $H$-invariant curves form a $\Gamma$-orbit.

The singular curve $C_\text{sing} \subset \mathbb P_2$ defined by
$3z_0^2z_1^2z_2^2 -(z_0^5z_1+z_2^5z_0+z_1^5z_2)$ has exactly seven
singular points $p_1, \dots p_7$ forming an $H$-orbit. Since they are in
general position, the blow up of $\mathbb{P}_2$ in these points defines a del Pezzo surface $Y_\text{Klein}$ of degree two with an action of $H$ and is seen to be the double cover of $\mathbb{P}_2$ branched along Klein's quartic curve 
$$
C_\text{Klein}:=\{z_0z_1^3+z_1z_2^3+z_2z_0^3=0\}.
$$
The proper transform $B$ of $C_\text{sing}$ in $Y_\text{Klein}$ is a
smooth $H$-invariant curve of genus three and coincides with the
preimage of $C_{\text{Klein}}$ in $Y_\text{Klein}$.
The minimal resolution $\tilde X_\text{sing}$ of the singular surface $X_\text{sing}$ defined as the double cover of $\mathbb P_2$ branched along $C_\text{sing}$ is a K3-surface with an action $H$. 
 By construction, it is the double cover of
$Y_\text{Klein}$ branched along $B$. In particular, $\tilde X
_\text{sing}$ is the degree four cyclic cover of $\mathbb P_2$ branched along $C_\text{Klein}$ and known as the Klein-Mukai-surface $X_{\text{KM}}$. 
The Klein-Mukai-surface is the unique K3-surface in the family $\mathcal M$ such that $Y \not\cong \mathbb{P}_2$.

Thus,
letting $\Sigma $ be $\mathbb P(V)$ with the reducible curve $\{z_0^2z_1^2z_2^2=0\}$
removed, the parameter space $\mathcal M$ is given by $\Sigma /\Gamma $ and the
description of K3-surfaces with $G$-actions can be formulated as follows.
\begin {mainthm}\label {main theorem}
The K3-surfaces equipped with an action of 
$H = C_3 \ltimes C_7$ of holomorphic symplectic 
transformations which is centralized by an antisymplectic 
involution $\sigma$  are parameterized
by the space $\mathcal M=\Sigma /\Gamma $ of equivalence classes of sextic branch curves in $\mathbb{P}_2$. The Klein-Mukai-surface occurs as the minimal desingularization of the double cover branched along the unique singular curve in $\mathcal M$.
\end {mainthm}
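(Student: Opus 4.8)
The plan is to recast the classification of $G=C_2\times H$ actions on K3-surfaces, up to the equivalence of \S\ref{notation}, as the classification of $H$-invariant branch sextics up to the action of $\Gamma$, and then to read off the exceptional role of the singular member. I would begin with an arbitrary $(X,\alpha)\in\mathcal A$ and pass to the quotient $Y=X/\sigma$. One first verifies that $\mathrm{Fix}_X(\sigma)$ is non-empty---equivalently that $Y$ is a smooth rational surface rather than an Enriques surface---so that $\pi\colon X\to Y$ is a double cover branched along the smooth curve $\bar C:=\pi(\mathrm{Fix}_X(\sigma))$ and $Y$ carries the induced $H$-action. Applying the equivariant Mori-reduction of \S2 and invoking the classification obtained there for this particular $H$, the $H$-minimal model $Y_{\mathrm{min}}$ is either $\mathbb P_2$ with one of the two standard $H$-actions or the degree-two del Pezzo surface $Y_{\text{Klein}}$; the conic-bundle and remaining del Pezzo alternatives are excluded by the group structure of $H=C_3\ltimes C_7$.

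Next I would analyze the branch curve. Since $\sigma$ commutes with $H$, the involution descends and $\bar C$ is $H$-invariant. When $Y\cong\mathbb P_2$, the double-cover adjunction formula $K_X=\pi^*(K_{\mathbb P_2}+\tfrac12\bar C)$ together with the K3 condition $K_X=0$ forces $\bar C$ to be a smooth $H$-invariant sextic, hence a smooth member of $\mathbb P(V)$; conversely each such member realizes, as its double cover, a K3-surface carrying the required $G$-action. The non-reduced member $\{z_0^2z_1^2z_2^2=0\}$ is discarded because its associated double cover degenerates and does not yield a K3-surface with the prescribed action, which is precisely why the relevant parameter space is $\Sigma=\mathbb P(V)\setminus\{z_0^2z_1^2z_2^2=0\}$. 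The remaining, irreducible singular members correspond to the case $Y\cong Y_{\text{Klein}}$, where one passes instead to the minimal desingularization of the nodal double cover.

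The heart of the argument, and the step I expect to require the most care, is matching the equivalence relation on $\mathcal A$ with $\Gamma$-equivalence of branch curves. Given a biholomorphism $\varphi$ and a group automorphism $\psi\in\mathrm{Aut}(G)$ realizing an equivalence of two such actions, I would show that $\varphi$ descends to an isomorphism of the quotients intertwining the $H$-actions via $\psi$, and thence induces an automorphism of $\mathbb P_2$ carrying one branch sextic to the other; here the component $\psi|_H$ is absorbed by the fact, recalled above, that the two $H$-representations on $\mathbb P_2$ differ only by an outer automorphism of $H$. By the criterion stated above, two $H$-invariant sextics are related by an automorphism of $\mathbb P_2$ exactly when they lie in a common $\Gamma$-orbit, so the resulting map $\mathcal M\to\Sigma/\Gamma$ is well defined and injective; surjectivity is the converse construction of the previous paragraph. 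This identifies $\mathcal M$ with $\Sigma/\Gamma$.

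Finally, to pin down the distinguished point I would invoke that the three irreducible singular sextics form a single $\Gamma$-orbit, hence a single point of $\Sigma/\Gamma$, and that the minimal resolution of the double cover of $\mathbb P_2$ branched along $C_{\text{sing}}$ is the Klein-Mukai-surface $X_{\mathrm{KM}}$, realized as the double cover of $Y_{\text{Klein}}$ along $B$. Since $Y_{\text{Klein}}\not\cong\mathbb P_2$ while every smooth branch sextic gives $Y\cong\mathbb P_2$, this singular orbit is the unique class in $\mathcal M$ whose associated quotient satisfies $Y\not\cong\mathbb P_2$, which completes the proof.
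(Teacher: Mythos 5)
Your outline follows the same overall strategy as the paper (quotient by $\sigma$, equivariant Mori-reduction, reduction to $H$-invariant plane sextics modulo $\Gamma$), but it passes over the two steps that carry essentially all of the paper's work, and as written neither is justified. First, you take the branch locus to be ``the smooth curve $\bar C=\pi(\mathrm{Fix}_X(\sigma))$''. By Nikulin's classification (Proposition \ref{AN-description}), $\mathrm{Fix}_X(\sigma)$ is a priori $D_g\cup R_1\cup\dots\cup R_n$ with $n$ rational components, and the assertion that $n=0$ (Proposition \ref{n=0}) is the main combinatorial result of \S\ref{fine classification}: it requires the lemmas of \S\ref{branch curves and mori-fibers} on how Mori-fibers can meet branch curves, the Euler-characteristic estimate on $|\mathcal E_{\mathrm{Mori}}|$, and a case analysis over $n\in\{7,14\}$. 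The same bookkeeping, now with $n=0$ so that $|\mathcal E|\in\{0,7\}$, is what lets one pass from ``$Y_{\mathrm{min}}$ is $\mathbb P_2$ or a degree-two del Pezzo'' to ``$Y$ is $\mathbb P_2$ or the blow-up of $\mathbb P_2$ in the seven singular points of an invariant sextic''; your proposal silently identifies $Y$ with $Y_{\mathrm{min}}$ at this point, which is exactly what needs proof.

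Second, your key step ``by the criterion stated above, two $H$-invariant sextics are related by an automorphism of $\mathbb P_2$ exactly when they lie in a common $\Gamma$-orbit'' invokes precisely the statement that has to be established. The paper's argument is not formal: given $T\in\mathrm{SL}_3(\mathbb C)$ with $C$ and $T(C)$ both in $\Sigma_{\mathrm{reg}}$, one considers the group generated by $H$ and $THT^{-1}$ acting symplectically on the associated K3-surface and uses Mukai's classification to conclude that it is either $H$ (so $T$ normalizes $H$ and lies in $\Gamma\times H$) or $L_2(7)$ (which forces $C=T(C)=\mathrm{Hess}(C_{\mathrm{Klein}})$). A smaller but real omission of the same kind: you conclude that the branch sextic lies in $\mathbb P(V)$, but an $H$-invariant curve may be cut out by a semi-invariant polynomial; the paper must separately exclude the two one-dimensional $H$-eigenspaces by showing, via the location of the $\tau$-fixed points relative to the curve, that the lifted $C_3$ would act non-symplectically --- the same linearization argument that is also needed for your unproved converse claim that every curve in $\Sigma_{\mathrm{reg}}$ does yield a symplectic $H$-action.
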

\subsection{Application to the case $H = L_2(7)$}
In \cite {M} Mukai gives a classification of abstract finite
groups which can occur as groups of symplectic transformations
of a K3-surface. Precisely, he presents a list of eleven finite groups
such that a finite group 
occurs as a group of symplectic transformations on a K3-surface if and
only if it occurs as a subgroup of one of the groups in this list. For
every entry $H$ in his list, Mukai gives an explicit
example of a K3-surface with a symplectic $H$-action, but these examples are
by no means exhaustive. It is therefore of interest to describe all
surfaces where the groups from this list occur.

One first approach to understanding this situation is to consider
finite groups $G$ acting on K3-surfaces with normal subgroups
$H$ of symplectic transformations which are maximal in the
above sense, i.e., which appear in Mukai's list. In other
  words, if we consider the $G$-action, $\omega \mapsto \chi (g)\cdot \omega $,
on any symplectic form, then $H$ is the kernel of the character $\chi $
and $\chi $ identifies $G/H$ with some cyclic group $C_k$.

A particular example of a group on Mukai's list is $H=L_2(7)$. In this
case, we may identify $H$ with
its group of inner automorphisms and consequently the
map $G\to \mathrm {Aut}(H)$ defined by conjugation induces a
homomorphism 
$$
C_k\cong G/H\to \mathrm {Out}(H)\cong  C_2\,.
$$
Therefore, except in the case where $G$ is the only nontrivial
semidirect product $G= C_2\ltimes L_2(7)$, if $k\geq 2$, then
$H$ is centralized by a cyclic subgroup $C_m$ of nonsymplectic
transformations.  In fact one can show that $m=2,4$ are the only
possibilities (cf. \cite {OZ}; for a proof 
using Mori-reduction see \cite {F}). In both cases the existence of an
antisymplectic involution $\sigma$ centralizing $H$ follows.
\begin{remark}
Studying actions of a finite group $G$ on K3-surfaces, one finds in
many cases a cyclic group of nonsymplectic transformations as above 
which centralizes at least 
an interesting subgroup of the group of symplectic transformations in
$G$. This is the general principle that led to our interest in this subject.
\end{remark}
Now let us return to our original notation with 
$H= C_3\ltimes C_7$, but here regarded as a 
subgroup of $L_2(7)$. If an involution $\sigma$ centralizes $L_2(7)$,
then it certainly centralizes $H$. Consequently, 
$\mathcal M(L_2(7))$ is contained in $\mathcal M(H)$. 
The following is proved by checking which elements of $\mathcal M(H)$ have the symmetry of
the larger group $L_2(7)$.
\begin {mainthm} \label{thmL2(7)}
Among the K3-surfaces having a symplectic action of 
$C_3\ltimes C_7$ centralized by an antisymplectic
involution there are exactly two which
are equipped with $L_2(7)$-actions centralized by the same
involution.  These are $X_{\mathrm {KM}}$ and the surface 
defined as the 2:1 cover of $\mathbb P_2$ ramified over
the curve 
$\mathrm{Hess}(C_\text{Klein})=\{z_0^5z_1+z_2^5z_0+z_1^5z_2-5z_0^2 z_1^2 z_2^2=0\}$ .
\end {mainthm}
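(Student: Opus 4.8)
The plan is to exploit the inclusion $\mathcal{M}(L_2(7)) \subset \mathcal{M}(H)$ noted above and to decide, point by point in $\mathcal{M} = \Sigma/\Gamma$, whether the associated K3-surface carries the larger symmetry. I would organize the argument by the dichotomy supplied by the Main Theorem: either $Y \cong \mathbb{P}_2$, so that $X$ is the double cover of $\mathbb{P}_2$ branched along a smooth $H$-invariant sextic, or $Y \not\cong \mathbb{P}_2$, which by the Main Theorem happens only for the Klein-Mukai surface $X_{\mathrm{KM}}$.

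First I would treat the generic case $Y \cong \mathbb{P}_2$. Here an $L_2(7)$-action on $X$ centralizing $\sigma$ descends to $Y = \mathbb{P}_2$ and hence preserves the branch sextic $C$; conversely, any $L_2(7)$-action on $\mathbb{P}_2$ fixing $C$ lifts canonically to the double cover and commutes with the covering involution $\sigma$. Since $H$ already acts on $\mathbb{P}_2$ by the restriction of the three-dimensional representation underlying $L_2(7)$, and this representation determines the extension up to conjugacy and complex conjugation, the problem reduces to the purely invariant-theoretic question of which $C \in \mathbb{P}(V)$ are $L_2(7)$-invariant.

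The key step is then the classical invariant theory of $L_2(7)$ acting on $\mathbb{P}_2$. Because $L_2(7)$ is simple, an invariant curve is cut out by a genuinely invariant polynomial, and by Klein's computation the ring of invariants is generated in degrees $4, 6, 14, 21$, the space of sextic invariants being one-dimensional and spanned by $\mathrm{Hess}(C_\text{Klein})$. As $\mathrm{Hess}(C_\text{Klein}) = (z_0^5z_1+z_2^5z_0+z_1^5z_2) - 5z_0^2z_1^2z_2^2$ lies in $V$, I would verify that the corresponding point of $\mathbb{P}(V)$ is neither the removed reducible curve nor any of the three singular curves; a coefficient comparison shows $[-5:1] = [3:-\zeta^k]$ would force $\zeta^k = 3/5$, which is impossible. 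Thus this point defines a smooth sextic, an honest K3 double cover lying in $\Sigma$, and a single point of $\mathcal{M}$ after passing to the $\Gamma$-quotient.

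Finally I would return to $X_{\mathrm{KM}}$, the one surface with $Y \not\cong \mathbb{P}_2$. Although its branch curve $C_\text{sing}$ is itself only $H$-invariant, the earlier identification of $X_{\mathrm{KM}}$ as the degree-four cyclic cover of $\mathbb{P}_2$ branched along the $L_2(7)$-invariant Klein quartic $C_\text{Klein}$ exhibits the $L_2(7)$-action directly and shows that $\sigma$, the covering involution of the intermediate double cover, centralizes it. Combining the two cases yields exactly the two surfaces claimed. The main obstacle I anticipate is the descent-and-lift step in the generic case: one must ensure that every $L_2(7)$-symmetry centralizing $\sigma$ genuinely descends to the standard $L_2(7)$-action on $\mathbb{P}_2$, so that invariant theory applies, and that $X_{\mathrm{KM}}$ is not accidentally recovered among the smooth-sextic surfaces, which is precisely why the separate cyclic-cover description of $X_{\mathrm{KM}}$ is indispensable.
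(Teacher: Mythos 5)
Your proposal is correct and follows essentially the same route as the paper: reduce to the $L_2(7)$-invariant sextics in $\mathbb P(V)$ for the case $Y\cong\mathbb P_2$, and handle $X_{\mathrm{KM}}$ separately via the $L_2(7)$-invariance of $C_{\text{Klein}}$ and the cyclic-cover description. The only divergence is at the uniqueness of the invariant sextic, where you invoke Klein's classical result that the invariant ring is generated in degrees $4,6,14,21$, while the paper (after noting this classical route) gives a short self-contained argument: two distinct invariant smooth sextics would intersect in $36$ points forming a union of $L_2(7)$-orbits of size at least $21$, which is impossible.
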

\begin{remark}
In general,
the $H$-minimal model might not be that of a larger group. Here this 
occurs in the case of $X_{\mathrm{KM}} $, where $X_{\mathrm{KM}}/\sigma$ 
is $L_2(7)$-minimal but is not $H$-minimal.
\end{remark}
The following result of Oguiso and Zhang (\cite{OZ}) is a 
consequence of this theorem.
\begin{corollary}\label{corL2(7)}
If $X$ is a K3-surface with an action of a finite group $G$ containing 
$L_2(7)$ such that $|G / L_2(7)|=4$, then $X$ is the Klein-Mukai surface.
\end{corollary}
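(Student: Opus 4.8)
The plan is to reduce the statement to Theorem~\ref{thmL2(7)} and then to use the index hypothesis $|G/L_2(7)|=4$ to single out $X_{\mathrm{KM}}$ from the two surfaces that theorem produces. Throughout I consider the character $\chi\colon G\to\mathbb C^{*}$ describing the action of $G$ on a holomorphic symplectic form, so that $\ker\chi$ is the group of symplectic transformations in $G$.

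First I would produce an antisymplectic involution centralizing $L_2(7)$. Since $L_2(7)$ is simple and nonabelian, $\chi|_{L_2(7)}$ is trivial and hence $L_2(7)\subseteq\ker\chi$. As $\ker\chi$ is a group of symplectic transformations containing $L_2(7)$, and $L_2(7)$ is maximal in Mukai's list (no symplectic K3-group properly contains it, its order being the only one divisible by $7$), we must have $\ker\chi=L_2(7)$; in particular $L_2(7)$ is normal in $G$ and $\chi(G)\cong G/L_2(7)\cong C_4$. We are therefore in the situation of the discussion preceding Theorem~\ref{thmL2(7)} with $k=4$, which is not the exceptional semidirect product $C_2\ltimes L_2(7)$. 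Hence $L_2(7)$ is centralized by a cyclic group $C_m$ of nonsymplectic transformations with $m\in\{2,4\}$, and the unique element $\sigma\in C_G(L_2(7))$ with $\chi(\sigma)=-1$ is an antisymplectic involution centralizing $L_2(7)$, and a fortiori centralizing the subgroup $H=C_3\ltimes C_7\subset L_2(7)$.

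Consequently $X$ carries a symplectic action of $H$ centralized by the antisymplectic involution $\sigma$ which extends to $L_2(7)$ centralized by $\sigma$, so Theorem~\ref{thmL2(7)} applies: $X$ is isomorphic either to $X_{\mathrm{KM}}$ or to the double cover $X_{\mathrm{Hess}}$ of $\mathbb P_2$ branched along $\mathrm{Hess}(C_{\mathrm{Klein}})$. It remains to exclude $X_{\mathrm{Hess}}$. The distinguishing invariant is the order of $\chi(G)$, which we have shown equals $4$: there is $g\in G$ acting on the holomorphic $2$-form by a primitive fourth root of unity. On $X_{\mathrm{KM}}$ such an automorphism exists, since $X_{\mathrm{KM}}$ is the $4{:}1$ cyclic cover of $\mathbb P_2$ branched along $C_{\mathrm{Klein}}$: its order-four deck transformation $\rho$ satisfies $\chi(\rho)=i$, commutes with the $L_2(7)$-action induced from $\mathbb P_2$, and realizes $G=C_4\times L_2(7)$ with $|G/L_2(7)|=4$. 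For $X_{\mathrm{Hess}}$ I would instead show that no automorphism normalizing $L_2(7)$ acts on the $2$-form by a primitive fourth root of unity, so that $\chi(G)\cong C_2$ and $|G/L_2(7)|\le2$ for every admissible $G$; this contradicts $\chi(G)\cong C_4$ and forces $X\cong X_{\mathrm{KM}}$.

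The main obstacle is exactly this last exclusion: proving that the Hessian double cover admits only an antisymplectic involution, and no nonsymplectic automorphism of order four, as nonsymplectic symmetry normalizing $L_2(7)$. I expect this to be the only genuinely computational point — most directly by examining the induced action on the holomorphic $2$-form and on the transcendental lattice, or equivalently by analyzing the position of $X_{\mathrm{Hess}}$ in the family $\mathbb P(V)$ relative to the unique singular curve of Theorem~\ref{main theorem} (recall that $X_{\mathrm{KM}}$ is singled out there as the only member with $Y\not\cong\mathbb P_2$). The reduction to Theorem~\ref{thmL2(7)} and the construction of the order-four symmetry on $X_{\mathrm{KM}}$ are, by contrast, essentially formal.
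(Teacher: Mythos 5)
Your reduction to Theorem~\ref{thmL2(7)} is sound and matches the paper: $L_2(7)=\ker\chi$ by simplicity and maximality in Mukai's list, $\chi(G)\cong C_4$, the antisymplectic involution $\sigma$ centralizing $L_2(7)$ comes from the discussion of $G\to\mathrm{Out}(L_2(7))$, and Theorem~\ref{thmL2(7)} leaves exactly two candidates. The construction of the order-four nonsymplectic symmetry on $X_{\mathrm{KM}}$ is also fine (and already appears in the paper's proof of Theorem~\ref{main theorem}). But the entire content of the corollary is the exclusion of the Hessian cover, and there you have only stated what you ``would show'' --- that $X_{\mathrm{Hess}}$ admits no nonsymplectic automorphism of order four normalizing $L_2(7)$ --- together with two suggested methods (transcendental lattice; position in $\mathbb P(V)$) neither of which is carried out. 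The second suggestion in particular does not obviously lead anywhere: the fact that $X_{\mathrm{KM}}$ is the unique member with $Y\not\cong\mathbb P_2$ distinguishes the two surfaces but does not by itself say anything about which one carries a $C_4$. As written, the proof has a genuine gap at its decisive step.

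The paper closes this gap by a concrete argument on the quotient rather than on $X$ itself: if $X\neq X_{\mathrm{KM}}$ then $Y=X/\sigma=\mathbb P_2$ and the group $\Lambda:=G/\langle\sigma\rangle$, of order $336$, acts on $\mathbb P_2$ stabilizing $\mathrm{Hess}(C_{\mathrm{Klein}})$. Since all subgroups of order $21$ in $L_2(7)$ are conjugate, any $\tau\in\Lambda\setminus L_2(7)$ can be corrected by an element of $L_2(7)$ to normalize $H$, so $N_\Lambda(H)$ has order $42$; it normalizes $H'$ and hence stabilizes the three-point set $F=\mathrm{Fix}(H')\subset\mathbb P_2$. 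A direct check in coordinates shows that the only elements of $\mathrm{Stab}(F)\subset\mathrm{PGL}_3(\mathbb C)$ preserving $\mathrm{Hess}(C_{\mathrm{Klein}})$ are the $21$ elements of $H$ --- a contradiction with $|N_\Lambda(H)|=42$. If you want to keep your formulation in terms of the order-four element $g$ with $\chi(g)=i$, you would still need to descend $g$ to $\mathbb P_2$ (note $g^2\in\sigma L_2(7)$, so this requires passing to $\Lambda$ anyway) and then perform essentially the same finite verification in $\mathrm{PGL}_3(\mathbb C)$; the lattice-theoretic route would be a genuinely different and heavier computation.
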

\begin{proof}[Proof of Corollary \ref{corL2(7)}]
Since $L_2(7)$ is simple and maximal in the above sense, 
it coincides with the group of symplectic transformations in $G$. 
In particular, $G / L_2(7) = C_4$ and a group 
$\langle \sigma \rangle$ of
order two is contained in the kernel of $G\to \mathrm {Aut}(L_2(7))$.
Consequently we are in the setting of Theorem \ref{thmL2(7)} where
$\Lambda :=G/\langle \sigma \rangle$ acts on $Y=X/\sigma $.  If
$X\not =X_{KM}$, then $Y= \mathbb P_2$. To complete the proof
we must eliminate this possibility.

For this let $\tau $ be any element of $\Lambda $ which
is not in $L_2(7)$ and consider the conjugate $\tau H\tau ^{-1}$.
Since any two subgroups of order 21 in $L_2(7)$ are conjugate to
each other by an element of $L_2(7)$, it follows that there exists
$g\in L_2(7)$ with $(g\tau )H(g\tau)^{-1}=H$.  Thus the normalizer
$N(H)$ in $\Lambda $ is a group of order 42 which also normalizes the commutator subgroup
$H'$ and therefore stabilizes its set $F$ of fixed points.

Using the same coordinates $(z_0:z_1:z_2)$ of $\mathbb{P}_2$ 
as in Theorem \ref{thmL2(7)} one directly
checks that the only transformations in $\mathrm {Stab}(F)$ which stabilize 
the branch curve $\mathrm {Hess}(C_{\mathrm {Klein}})$
are those in $H$ itself.  This contradiction shows that
$Y\not =\mathbb P_2$ and therefore $X=X_{KM}$. 
\end{proof}
\begin {remark}
The assumptions of the corollary may in fact we weakened. As we remarked above, one can show that $|G / L_2(7)|\geq 3$ implies $|G / L_2(7)|=4$ (see \cite{OZ} or \cite{F}).
\end {remark}
Let us conclude this introduction with a brief outline of
this note. In $\S\ref{general methods}$ 
we explain the equivariant Mori-reduction and prove several general facts about 
the position of the Mori-fibers with respect to the images in $Y$ 
of the $\sigma $-fixed point curves.  These
will be used in $\S\ref{fine classification}$ to give the proofs 
of the theorems
which are stated above. They will also play a fundamental role
in \cite {FH} where we prove analogous theorems for any
group $H$ which is either sufficiently large or sufficiently
complicated.
%
%
%
%
\section {General Methods}\label {general methods}
\subsection{Quotients of K3-surfaces by antisymplectic involutions}
Continuing with the notation of Section \ref{notation}, where
$H$ is an arbitrary finite group of symplectic automorphisms
on a K3-surface $X$ and $\sigma $ is an antisymplectic
involution which centralizes it, we begin
by recalling that there are strong restrictions on the nature
of $\mathrm {Fix}(\sigma )$ (\cite {AN}).
\begin {proposition} \label {AN-description}
If $\sigma$ is an antisymplectic involution on a K3-surface
$X$, then $\mathrm{Fix}_X(\sigma)$ is one of the following types:
\[
1.)\ \mathrm{Fix}_X(\sigma) = D_g \cup \bigcup_{i=1}^n R_i,
\quad \quad
2.)\ \mathrm{Fix}_X(\sigma) = D_1^{(1)} \cup D_1^{(2)},
\quad ¸\quad
3.)\ \mathrm{Fix}_X(\sigma)= \emptyset,
\]
 where $D_g$ denotes a smooth curve of genus $g$ and $\bigcup_{i=1}^n R_i$
 is a possibly empty union of smooth disjoint rational curves. 
\end {proposition}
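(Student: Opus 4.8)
The plan is to first pin down the local and dimensional structure of $\mathrm{Fix}_X(\sigma)$ and then to extract the constraints on genera from the Hodge-theoretic splitting of $H^2(X)$ under $\sigma^*$. Since $\sigma$ has finite order, near any fixed point $p$ the action can be linearized, so $\mathrm{Fix}_X(\sigma)$ is smooth and locally modeled on the $(+1)$-eigenspace of $d\sigma_p$ acting on $T_pX\cong\mathbb C^2$. The two eigenvalues lie in $\{+1,-1\}$, and $d\sigma_p$ acts on $\Lambda^2 T_p^*X$, where the symplectic form $\omega$ lives, by the product of their inverses. The antisymplectic hypothesis $\sigma^*\omega=-\omega$ forces this product to be $-1$, so the eigenvalues are exactly $+1$ and $-1$: the cases $\{+1,+1\}$ and $\{-1,-1\}$ would make $\sigma$ symplectic at $p$ and are excluded. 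Hence every component of $\mathrm{Fix}_X(\sigma)$ is a smooth curve, and the set is a disjoint union of smooth curves (or empty).

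Next I would analyze $\sigma^*$ on $H^2(X,\mathbb R)$, which carries the intersection form of signature $(3,19)$. Writing $H^2(X,\mathbb R)=H^2_+\oplus H^2_-$ for the $(\pm1)$-eigenspaces, the relation $\sigma^*\omega=-\omega$ places the positive-definite real $2$-plane spanned by $\mathrm{Re}\,\omega$ and $\mathrm{Im}\,\omega$ inside $H^2_-$, while averaging a K\"ahler class produces a $\sigma$-invariant K\"ahler class of positive square in $H^2_+$. Comparing with the global signature $(3,19)$, the invariant lattice $H^2_+$ therefore has signature $(1,r-1)$ with exactly one positive direction, where $r=\mathrm{rank}\,H^2_+$.

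The genus constraints then follow from a signature count. Each connected component $C_j$ of $\mathrm{Fix}_X(\sigma)$ is $\sigma$-invariant, so $[C_j]\in H^2_+$; since $K_X=\mathcal O_X$, adjunction gives $[C_j]^2=2g_j-2$, while disjointness gives $[C_j]\cdot[C_{j'}]=0$ for $j\neq j'$. Thus the classes of the components are mutually orthogonal in $H^2_+$ with squares $2g_j-2$. Because $H^2_+$ has only one positive direction, at most one component can have $[C_j]^2>0$, i.e.\ at most one $g_j\ge2$; and if such a component $C_0$ exists, every other $[C_j]$ lies in the negative-definite complement $[C_0]^\perp\cap H^2_+$, forcing $[C_j]^2<0$ and hence $g_j=0$. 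This is precisely Type~1 with a distinguished curve of genus $\ge2$ accompanied by rational curves.

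It remains to treat the case in which no component has genus $\ge2$, so that every $[C_j]$ is either isotropic ($g_j=1$) or has square $-2$ ($g_j=0$). Mutually orthogonal isotropic classes span a totally isotropic subspace of $H^2_+$, which has dimension at most one in signature $(1,r-1)$; hence all elliptic components have proportional classes, and two disjoint smooth elliptic curves with proportional class are fibers of an elliptic pencil. The remaining bookkeeping distinguishes the configurations ``one elliptic curve together with rational curves'' (the $g=1$ instance of Type~1) and ``exactly two elliptic curves'' (Type~2), with the empty fixed locus (Type~3) corresponding to a free involution. I expect this last enumeration to be the main obstacle: the clean signature argument bounds the number of high-genus and isotropic components but does not by itself decide exactly which rational-and-elliptic configurations are realized. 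Completing it requires the finer invariants $(r,a,\delta)$ of the eigenlattice together with Nikulin's discriminant-form computation, for which I would appeal to \cite{AN}.
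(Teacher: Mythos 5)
The paper does not prove this proposition at all: it is quoted verbatim as a known result of Nikulin, with the citation \cite{AN} (and the bound of $10$ on the number of components attributed to \cite{Z2}). So any comparison is between your sketch and Nikulin's lattice-theoretic classification rather than an argument in the text. What you write is correct as far as it goes, and it is the standard route. The linearization argument (eigenvalues $\{+1,-1\}$ forced by $\sigma^*\omega=-\omega$, hence no isolated fixed points and a smooth fixed curve) is exactly right, as is the placement of the period plane in $H^2_-$ and of an averaged K\"ahler class in $H^2_+$, giving the invariant lattice signature $(1,r-1)$; the orthogonality-plus-adjunction count then correctly yields ``at most one component of genus $\ge 2$, and if one exists all others are rational,'' and ``all genus-one components have proportional isotropic classes.''

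The gap you flag at the end is genuine, and it is worth being precise about why the signature argument cannot close it: in a lattice of signature $(1,r-1)$ there is no obstruction to having two proportional isotropic classes together with several mutually orthogonal $(-2)$-classes orthogonal to them, so nothing in your count excludes a configuration such as two elliptic fibers plus additional rational components lying in singular fibers of the same elliptic pencil, nor does it immediately rule out three or more elliptic fibers. Excluding these requires either a geometric analysis of how $\sigma$ acts on the elliptic fibration $|E_1|$ (the induced involution of the base either is trivial or has exactly two fixed points, and one must control the fixed locus in the remaining fibers) or Nikulin's finer invariants $(r,a,\delta)$ of the eigenlattice and the formula expressing $(g,n)$ in terms of them. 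Since the paper itself simply delegates the entire statement to \cite{AN}, your appeal to the same reference for this last step is consistent with the paper's treatment; just be aware that what you have is a correct partial argument plus a citation, not a self-contained proof.
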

To clarify the notation, this means in particular that if there are
at least two elliptic curves in $\mathrm{Fix}_X(\sigma)$, then the
$\sigma$-fixed point set is the union of two elliptic curves, i.e., case 2.) occurs. It
also should be remarked that the total number of curves in
$\mathrm{Fix}_X(\sigma)$ is at most 10 (\cite {Z2}). Using this fact
it would be possible to shorten one of our combinatorial arguments in the proof
of Proposition \ref{n=0}, but instead we give a self-contained presentation.

Let $\pi: X \to X/ \sigma = Y$ denote the quotient map. In the following, we
assume that $\mathrm{Fix}_X(\sigma)$ is nonempty. This implies that 
the quotient surface $Y$ is a smooth rational surface with an action of $H$. 
\begin{remark}
Note that the centralizer of an automorphism stabilizes its set of
fixed points.  If $h$ is a symplectic automorphism of order 7 on a 
K3-surface $X$, then $\vert \mathrm {Fix}_X(h)\vert =3$ (\cite{N}). 
In particular, if an automorphism $\sigma $ of order two centralizes $h$, then 
$\mathrm {Fix}_X(\sigma)\cap \mathrm {Fix}_X(h)\not=\emptyset$.
\end{remark}
\subsection{The equivariant minimal model program for surfaces}
Mori's minimal model program may be adjusted to provide an
equivariant version for projective varieties equipped with actions
of finite groups (Example 2.18 in \cite{KM}).  Details
in the case of interest in the present note, i.e., for smooth surfaces,
are provided in \cite {F}.
Here we give a brief description of this $\emph{Mori-reduction}$.

Let $Y$ be a smooth projective surface with an action of a finite
group $H$. Equivariant analogues of the cone and contraction
theorems provide the following classification
result. 
\begin{proposition}
There exists a sequence of $H$-equivariant
extremal contractions $Y \to Y_{(1)} \to \dots \to Y_{\mathrm{min}}$ such that
$Y_{\mathrm{min}}$ satisfies one of the following conditions:
\begin{enumerate}
\item
$K_{Y_\mathrm{min}}$ is nef;
\item
$Y_\mathrm{min}$ is an $H$-equivariant conic bundle over a smooth curve,
i.e., there exists an $H$-equivariant morphism
$Y_{\mathrm{min}} \to C$ onto a smooth curve $C$ such that the general
fiber is a rational curve;
\item
$-K_{Y_{\mathrm{min}}}$ is ample.
\end{enumerate}
Each extremal contraction is the contraction of an $H$-orbit of disjoint $(-1)$-curves.
\end{proposition}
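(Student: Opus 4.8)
The plan is to run the $H$-equivariant minimal model program by hand, using the equivariant cone and contraction theorems as the engine and keeping track of the group action at every step. First I would dispose of the trivial case: if $K_Y$ is already nef we are in situation (1) with $Y_{\mathrm{min}}=Y$. Otherwise the equivariant cone theorem describes the $K_Y$-negative part of the $H$-invariant cone of curves $\overline{NE}(Y)^H$ as locally polyhedral, so it contains an extremal ray; concretely this corresponds to an $H$-orbit $R_1,\dots,R_k$ of $K_Y$-negative extremal rays of $\overline{NE}(Y)$ spanning an $H$-invariant extremal face $F$. The equivariant contraction theorem then furnishes an $H$-equivariant morphism $\phi_F\colon Y\to Z$ contracting exactly the curves whose numerical class lies in $F$.

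The heart of the argument is to classify $\phi_F$ by the dimension of $Z$, which on a smooth surface is governed by a single numerical invariant. Each extremal ray is generated by a smooth rational curve $C$, and adjunction gives $K_Y\cdot C=-2-C^2$, so $K_Y\cdot C<0$ forces $C^2\ge -1$. The three possibilities $C^2=-1$, $C^2=0$ and $C^2\ge 1$ correspond respectively to a divisorial contraction ($C$ a $(-1)$-curve, $\dim Z=2$), a conic bundle over a smooth curve ($\dim Z=1$), and a contraction to a point with $-K_Y$ ample ($\dim Z=0$). Since $H$ acts by automorphisms, every curve $hC$ in the orbit has the same self-intersection, so the face $F$ is of a single type throughout. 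If $\dim Z\in\{0,1\}$ I would stop: these are exactly cases (3) and (2) of the statement, realized by $Y_{\mathrm{min}}=Y$.

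The step I expect to demand the most care is the divisorial case, where I must show that $\phi_F$ contracts precisely an $H$-orbit of pairwise disjoint $(-1)$-curves onto a smooth surface. The contracted curves are the orbit $C_1,\dots,C_k$ of $C$, each a $(-1)$-curve. The decisive input is that the intersection form on the exceptional locus of a birational morphism onto a normal surface is negative definite. Restricting this form to any pair $C_i,C_j$ yields the matrix $\bigl(\begin{smallmatrix}-1&m\\ m&-1\end{smallmatrix}\bigr)$ with $m=C_i\cdot C_j\ge 0$, whose determinant $1-m^2$ must be positive; hence $m=0$ and the curves are disjoint. Consequently $\phi_F$ is a simultaneous Castelnuovo contraction onto a smooth surface $Z$, and the $H$-action descends to $Z$. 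I would then replace $Y$ by $Z$ and iterate.

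Finally I would establish termination. Every divisorial contraction strictly drops the rank of the $H$-invariant N\'eron--Severi lattice, a positive integer, so only finitely many occur and the program stops at some $Y_{\mathrm{min}}$. At that point no $K_{Y_{\mathrm{min}}}$-negative divisorial ray survives, so either $K_{Y_{\mathrm{min}}}$ is nef, giving case (1), or the remaining $K_{Y_{\mathrm{min}}}$-negative extremal face is of fiber type, placing $Y_{\mathrm{min}}$ in case (2) or (3). This produces the asserted chain $Y\to Y_{(1)}\to\dots\to Y_{\mathrm{min}}$ in which each arrow is the contraction of an $H$-orbit of disjoint $(-1)$-curves.
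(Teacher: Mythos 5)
Two preliminary remarks. First, the paper offers no proof of this proposition: it is stated as a consequence of the equivariant cone and contraction theorems, with the details delegated to Example 2.18 of \cite{KM} and to \cite{F}. So your proposal can only be measured against the standard argument, which it follows in outline. Second, within that outline there is a genuine gap at the step where you classify $\phi_F$ by the self-intersection of a single generator $C$ of one ray of the orbit. The type of the contraction is governed by the extremal ray of $\overline{NE}(Y)^H$, i.e.\ by the orbit sum $D=\sum_i C_i$, and it is false that $C^2=-1$ forces $\dim Z=2$. On the del Pezzo surface of degree six, the composition of the standard Cremona involution with a transposition of two of the three blown-up points is an automorphism whose orbits on the hexagon of $(-1)$-curves are the three adjacent pairs; for the orbit $\{E_1,\,L-E_1-E_2\}$ the two $(-1)$-curves meet in a point, the orbit sum has class $L-E_2$ of square zero, and the associated extremal contraction is a conic bundle over $\mathbb P_1$, not a birational morphism. (The order-six rotation of the hexagon similarly yields an orbit of six pairwise meeting $(-1)$-curves with sum $-K_Y$, whose contraction is to a point.) Your appeal to negative definiteness of the exceptional locus is therefore circular: it presupposes that $\phi_F$ is birational, which is precisely what fails when two curves of the orbit meet.

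The gap is repairable by arguing on $D^2$ rather than on $C^2$. If two curves of the orbit meet, then transitivity of $H$ on the orbit forces every vertex of the intersection graph to have positive degree, so $\sum_{i<j}C_i\cdot C_j\ge k/2$ and $D^2\ge 0$. If $D^2>0$, then $D$ is an effective big divisor, so $[D]$ lies in the interior of $\overline{NE}(Y)$; an extremal ray of $\overline{NE}(Y)^H$ through an interior point forces $\rho(Y)^H=1$, and together with $K_Y\cdot D<0$ one lands in case (3). If $D^2=0$, the intersection graph is a perfect matching, each pair $C_i+C_j$ is a degenerate fiber ($K_Y\cdot(C_i+C_j)=-2$, square zero, moving by Riemann--Roch), and the contraction is of fiber type, case (2). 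Only when the orbit is pairwise disjoint is the contraction birational, and there your Castelnuovo argument, the descent of the $H$-action to the smooth blow-down, and termination via the drop of $\rho^H$ go through as written and yield the final assertion that each step contracts an $H$-orbit of disjoint $(-1)$-curves.
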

\begin{definition}
The surface $Y_\text{min}$ is referred to as an  \emph{$H$-minimal model of $Y$},
and the map $Y \to Y_\text{min}$ is called a \emph{Mori-reduction}. 
A (connected) curve $E \subset Y$ is called \emph{Mori-fiber} 
if it is contracted
in some step of the Mori-reduction.
The set of all Mori-fibers is denoted by $\mathcal{E}_{\text{Mori}}$.
\end{definition}
In the present note we apply the equivariant minimal model program
to the rational surface $Y$ obtained as a quotient of the K3-surface 
$X$ by an antisymplectic involution $\sigma $. Here $Y$ is equipped 
with the action of the finite group $H$ of holomorphic automorphisms
which initially was acting on $X$ and centralized by $\sigma $.
An $H$-minimal model of $Y$ can either be a del Pezzo surface 
or an equivariant conic bundle over $\mathbb{P}_1$. We let
$n$ denote the number of rational curves in $\mathrm{Fix} (\sigma)$.
\begin{remark}
The Euler characteristic of the branched double cover $\pi: X \to Y$
is computed as
$e(X)=24=2e(Y) - \sum e(C)$,
where the sum is taken over all connected components $C$ of $\mathrm{Fix}_X(\sigma)$. Hence
$$
24\geq 2e(Y) - 2n \notag =2e(Y_\mathrm{min}) + 2|\mathcal{E}_{\text{Mori}}| -2n \geq 6 + 2|\mathcal{E}_{\text{Mori}}| -2n
$$
 and the total number of Mori-fibers $|\mathcal{E}_{\text{Mori}}|$ is bounded by $n+9$.
\end{remark}
\subsection{Branch curves and Mori-fibers}\label{branch curves and mori-fibers}
Let $R:= \mathrm{Fix}_X(\sigma) \subset X$ denote the ramification
locus of $\pi$ and let $B=\pi(R) \subset Y$ be its branch locus. We
denote by $\mathcal{E} \subset \mathcal{E}_{\text{Mori}}$ the set of
all Mori-fibers which are not contained in the branch locus $B$.
\begin{lemma}\label{preimage of E in X}
Let $E \in \mathcal{E}$ be a Mori-fiber such that $|E\cap B| \geq 2$ or $(E,B) \geq 3$. Then $\pi^{-1}(E)$
is a smooth rational curve in $X$. 
\end{lemma}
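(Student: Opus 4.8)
The plan is to analyze the covering $\pi: X \to Y$ restricted to the Mori-fiber $E$. Since $E$ is a $(-1)$-curve contracted in the Mori-reduction, it is a smooth rational curve $E \cong \mathbb{P}_1$ in $Y$. The preimage $\pi^{-1}(E)$ is a double cover of $\mathbb{P}_1$ branched exactly over the points of $E \cap B$, since $E$ is not contained in the branch locus $B$ (by the assumption $E \in \mathcal{E}$). First I would invoke the Riemann-Hurwitz formula for the restricted map $\pi|_{\pi^{-1}(E)}: \pi^{-1}(E) \to E \cong \mathbb{P}_1$. The key point is to determine whether this restricted cover is connected (hence irreducible) and to compute the genus of its normalization.

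\textbf{The dichotomy on the cover.} The double cover $\pi^{-1}(E) \to E$ is either connected or splits into two disjoint copies of $E$. The latter happens precisely when $E$ meets the branch locus in no ramification points, i.e., when $E \cap B = \emptyset$ in the relevant sense. Under either hypothesis $|E \cap B| \geq 2$ or $(E,B) \geq 3$, the curve $E$ genuinely meets the branch locus, so the cover must be connected; this rules out the split case and shows $\pi^{-1}(E)$ is irreducible. I would then apply Riemann-Hurwitz: writing $\tilde{E}$ for the normalization of $\pi^{-1}(E)$, one has $2g(\tilde{E}) - 2 = 2(2 \cdot 0 - 2) + r$, where $r$ is the number of ramification points counted appropriately. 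The heart of the matter is that the ramification of $\pi|_{\pi^{-1}(E)}$ occurs exactly at the points of $E \cap B$, and the intersection multiplicity $(E,B)$ controls the local structure of the cover there.

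\textbf{Proving smoothness and rationality.} To conclude that $\pi^{-1}(E)$ is a \emph{smooth} rational curve, I would argue on two fronts. For rationality, the Riemann-Hurwitz computation should force $g(\tilde{E}) = 0$: the branch locus $B$ is smooth (being the image of the smooth fixed-point curve $R = \mathrm{Fix}_X(\sigma)$, which is smooth by Proposition \ref{AN-description}), so $E$ meets $B$ transversally at simple points, contributing the branch points for the double cover. The count of these branch points, governed by whether $|E \cap B| = 2$ exactly or $(E,B) = 2$ arises with tangency, must come out to give genus zero — this is where the precise hypotheses $|E \cap B| \geq 2$ or $(E,B) \geq 3$ matter, and I expect the delicate bookkeeping to lie here. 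For smoothness of $\pi^{-1}(E)$ itself (not just its normalization), I would use that $X$ is smooth and that $\pi^{-1}(E)$ is a Cartier divisor on $X$; since $E$ is smooth and meets the smooth branch curve $B$ transversally, the preimage inherits smoothness away from the ramification points, and the transversality of the intersection guarantees smoothness at the ramification points as well.

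\textbf{The main obstacle.} The hard part will be handling the genus computation under the disjunctive hypothesis and verifying that the two alternative conditions $|E \cap B| \geq 2$ and $(E,B) \geq 3$ each independently force connectedness and genus zero in a uniform way. In particular, the case where $E$ is tangent to $B$ (so $|E \cap B|$ is small but $(E,B)$ is large) requires care: a point of tangency contributes to the intersection number without increasing the number of distinct branch points, so the local analysis of the double cover at such a point differs from the transverse case. I would need to confirm that even in the tangent situation the preimage remains smooth and rational, which likely relies on the fact that $B$ is smooth and that after the Mori-reduction the configuration of $E$ relative to $B$ is constrained. This local-to-global genus and smoothness verification is the technical core of the lemma.
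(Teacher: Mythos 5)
Your approach via Riemann--Hurwitz on the restricted cover $\pi|_{\pi^{-1}(E)}$ is genuinely different from the paper's, but it has gaps that I do not think can be repaired without importing the paper's actual argument. First, the connectedness step is wrong as stated: a double cover of $E\cong\mathbb P_1$ branched along the divisor $E\cap B$ splits into two components precisely when every local intersection multiplicity $(E,B)_p$ is even (locally $w^2=t^{2m}$ factors as $(w-t^m)(w+t^m)$), and this is perfectly compatible with $|E\cap B|\ge 2$ or $(E,B)\ge 3$ --- for instance two simple tangencies. So ``$E$ genuinely meets $B$, hence the cover is connected'' is a non sequitur. The paper rules out reducibility not topologically but numerically: if $\pi^{-1}(E)=\tilde E_1\cup\tilde E_2$, each component maps isomorphically to $E$, hence is a smooth rational curve on the K3-surface $X$ and has $\tilde E_i^2=-2$ by adjunction (using $K_X=0$); the two components meet only over $E\cap B$, with total intersection number at least $2$ under either hypothesis, so $2E^2=(\pi^{-1}(E))^2=-4+2(\tilde E_1,\tilde E_2)\ge 0$, contradicting $E^2<0$ for a Mori-fiber. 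You never use $E^2<0$ or the triviality of $K_X$, and these are the essential inputs.

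Second, even granting irreducibility, Riemann--Hurwitz cannot yield genus zero from the stated hypotheses, because they only bound the branching from below: if $E$ met $B$ transversally in four points, the connected double cover would have genus one, and nothing in your computation excludes this. Your appeal to ``$B$ smooth, hence $E$ meets $B$ transversally at simple points'' is also unjustified --- smoothness of $B$ says nothing about transversality of its intersection with $E$, and in the paper the transversality statement is Proposition \ref{at most two}, which is proved \emph{using} this lemma, so you would be reasoning in a circle. The correct mechanism is again adjunction on $X$: an irreducible curve $C\subset X$ satisfies $C^2=2p_a(C)-2\ge -2$, so $(\pi^{-1}(E))^2=2E^2<0$ forces $E^2=-1$, $p_a(\pi^{-1}(E))=0$, and $\pi^{-1}(E)$ smooth rational. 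I would recommend abandoning the Riemann--Hurwitz framing and arguing with self-intersection numbers on the K3-surface as above.
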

\begin{proof}
Let $k < 0$ denote self-intersection number of $E$. The divisor
$\pi^{-1}(E)$ has self-inter\-section $2k$. Assume that $\pi^{-1}(E)$ is
reducible and let $\tilde E_1, \tilde E_2$ denote its irreducible components.  They are rational
and therefore have self-intersection number $-2$.  Write
$
2k = (\pi^{-1}(E))^2 = \tilde E_1^2 + \tilde E_2^2 + 2 (\tilde E_1,\tilde E_2)
$. 
Since $\tilde E_1$ and $\tilde E_2$ intersect at points in the preimage of $E \cap B$, we obtain $(\tilde E_1,\tilde E_2) \geq 2$, a contradiction. It follows that $\pi^{-1}(E)$ is irreducible. Consequently, $k=-1$ and $\pi^{-1}(E)$ is a smooth rational curve.
\end{proof}
\begin{remark}\label{self-int of Mori-fibers}
Considering the preimage $\pi ^{-1}(E) \subset X$ of a Mori-fiber $E
\in \mathcal E$ it follows from adjunction on $X$ that $E$ is a $(-1)$-curve if
and only if $E \cap B \neq \emptyset$; all Mori-fibers disjoint from
$B$ are $(-2)$-curves and all Mori-fibers $E \subset B$ are $(-4)$-curves.
In particular, if $E_1,E_2\in \mathcal E$ are two Mori-fibers which
have nonempty intersection, then $E_1\cap E_2$ is contained in the
complement of $B$.
\end{remark}
\begin {proposition} \label {at most two}
Every Mori-fiber $E \in \mathcal{E}$ meets the branch locus $B$ in at
most two points. If $E$ and $B$ are tangent at $p$, then 
$E\cap B = \{p\}$ and $(E,B)_p =2$.
\end {proposition}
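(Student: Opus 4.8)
The plan is to extract all the needed information from the restricted double cover $\pi|_{\tilde E}\colon \tilde E \to E$, where $\tilde E := \pi^{-1}(E)$ is the preimage of the Mori-fiber $E \in \mathcal E$. First I would dispose of the case $E \cap B = \emptyset$, for which there is nothing to prove. Otherwise Remark~\ref{self-int of Mori-fibers} shows that $E$ is a $(-1)$-curve, so in particular $E \cong \mathbb P_1$, and since $E \not\subset B$ the map $\pi|_{\tilde E}\colon \tilde E \to E$ is a genuine double cover. The whole argument will rest on comparing the local behaviour of this cover over the points of $E \cap B$ with the global constraint supplied by Lemma~\ref{preimage of E in X}.

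The decisive step is a local computation at a point $p \in E \cap B$. In analytic coordinates $(x,y)$ on $Y$ with $B = \{y = 0\}$ the cover $\pi$ is modelled by $w^2 = y$, so if $E = \{g(x,y) = 0\}$ then $\tilde E = \{g(x,w^2) = 0\}$. Differentiating and using $w = 0$ over $p$, I would check that the point $\tilde p$ lying over $p$ is a singular point of $\tilde E$ precisely when $g_x(p) = 0$, that is, precisely when $E$ is tangent to $B$ at $p$; a transverse intersection instead yields a smooth point of $\tilde E$ at which $\pi|_{\tilde E}$ is simply ramified. In other words, tangency of $E$ and $B$ is equivalent to a singularity of $\tilde E$ on the ramification, whereas the transverse points of $E \cap B$ are exactly the branch points of $\pi|_{\tilde E}$.

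Granting this dictionary, the bound on the number of intersection points is immediate. If $|E \cap B| \geq 2$, then by Lemma~\ref{preimage of E in X} the curve $\tilde E$ is smooth and rational, so $\pi|_{\tilde E}\colon \mathbb P_1 \to \mathbb P_1$ is a double cover and Riemann--Hurwitz forces exactly two branch points. Smoothness of $\tilde E$ excludes every tangency by the local computation, so each point of $E \cap B$ is transverse and hence a branch point, giving $|E \cap B| = 2$. Thus $E$ meets $B$ in at most two points in all cases. For the tangency statement, suppose $E$ and $B$ are tangent at $p$; then $\tilde E$ is singular over $p$. If $|E \cap B| \geq 2$, or if $(E,B) \geq 3$, Lemma~\ref{preimage of E in X} would force $\tilde E$ to be smooth, a contradiction. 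Hence $E \cap B = \{p\}$, and since tangency means $(E,B)_p \geq 2$ while now $(E,B) = (E,B)_p \leq 2$, we conclude $(E,B)_p = 2$.

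I expect the only real obstacle to be the local analysis of the branched cover: one must verify carefully that tangency with the smooth branch curve $B$ forces a singularity of $\tilde E$ (a node when the contact order is even, a cusp when it is odd) and that, conversely, the transverse intersection points match the branch points counted by Riemann--Hurwitz. Once this local-to-global dictionary is in place, both assertions follow formally from Lemma~\ref{preimage of E in X}.
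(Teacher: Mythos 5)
Your proof is correct, and its first half coincides with the paper's: both invoke Lemma~\ref{preimage of E in X} to make $\tilde E=\pi^{-1}(E)$ a smooth rational curve and then count the two fixed points of the induced involution on $\tilde E\cong\mathbb P_1$ (your Riemann--Hurwitz count for the degree-two map is the same computation). Where you genuinely diverge is in the tangency statement. The paper works upstairs on $X$: it identifies a neighborhood of the smooth curve $\tilde E$ with its normal bundle via an equivariant $C^\infty$ tubular neighborhood theorem and argues that any $\sigma$-fixed curve meeting $\tilde E$ must be a fiber of $N_{\tilde E}$, so that $E$ and $B$ meet transversally whenever the hypotheses of the Lemma hold. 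You instead compute in the local model $w^2=y$ of the double cover and observe that tangency of $E$ with the smooth branch curve at $p$ forces $d\bigl(g(x,w^2)\bigr)=0$ at the point over $p$, i.e.\ a singular point of $\pi^{-1}(E)$, which is incompatible with the Lemma's conclusion; the endgame ($E\cap B=\{p\}$ and $(E,B)_p=2$) is then the same contrapositive the paper uses. Your local computation is more elementary and purely algebraic, avoiding the smooth tubular neighborhood, while the paper's version delivers in one stroke the slightly stronger statement that every fixed curve meets $\tilde E$ transversally. One cosmetic caveat: your parenthetical ``node when the contact order is even, cusp when it is odd'' is accurate only for contact orders $2$ and $3$ (contact order $m$ produces an $A_{m-1}$-singularity, e.g.\ a tacnode for $m=4$), but nothing in the argument depends on the type of singularity, only on its existence.
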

\begin{proof} 
Let $E \in \mathcal E$ and assume $|E\cap B| \geq 2$ or $(E,B) \geq
3$. Then by the lemma above, $\tilde E = \pi^{-1}(E)$ is a smooth
rational curve in $X$. Since $\tilde E \not\subset \mathrm{Fix}_X(\sigma)$, the involution $\sigma$ has exactly two fixed points on $\tilde E$ showing $|E\cap B| = 2$\\ 
Let $N_{\tilde{E}}$ denote the normal bundle of $\tilde{E}$ in $X$. We
consider the induced action of $\sigma$ on $N_{\tilde{E}}$ by a bundle
automorphism. Using an
equivariant tubular neighborhood theorem we may equivariantly identify a
neighborhood of $\tilde E$ in $X$ with $N_{\tilde E}$ via a 
$C^{\infty}$-diffeomorphism. The $\sigma$-fixed point curves
intersecting $\tilde{E}$ map to curves of $\sigma$-fixed points in
$N_{\tilde{E}}$ intersecting the zero-section and vice versa. 
Let $D$ be a curve of $\sigma$-fixed point in $N_{\tilde{E}}$. If $D$ is
not a fiber of $N_{\tilde E}$, it follows that $\sigma$ stabilizes all 
fibers intersecting
$D$ and the induced action of $\sigma$ on the base must be trivial, a contradiction.
It follows that the $\sigma$-fixed point curves correspond to fibers of 
$N_{\tilde{E}}$, and $E$ and $B$ meet transversally.\\
In particular, if $E$ and $B$ are tangent at $p$, then $|E \cap B |=1$ and $(E,B)=2$.
\end{proof}
%
%
%
\section {Fine classification} \label {fine classification}
For the remainder of this paper $H:= C_3\ltimes  C_7$, 
$A:=C_2=\langle \sigma  \rangle $ and 
$G=A\times H$ is acting as in $\S\ref{notation}$ on a K3-surface $X$. 
Here we prove the two theorems formulated in $\S\ref{results}$.
\subsection {Representation as ramified cover}
Recall that the commutator subgroup $H'\cong C_7$ has exactly three
fixed points in $X$ (\cite {N}). Since $H$ has no faithful 
2-dimensional representation, it must act transitively on
$\mathrm {Fix}_X(H')$. Now $\sigma$ stabilizes $\mathrm {Fix}_X(H')$ as well
and, as was mentioned above, has at least one fixed point there.
However, since $H$ acts transitively on $\mathrm {Fix}_X(H')$ and 
centralizes $\sigma $,
it follows that $\sigma $ fixes $\mathrm {Fix}_X(H')$ pointwise,
i.e., $\mathrm {Fix}_X(H')\subset \mathrm {Fix}_X(\sigma )$ and in particular
$\mathrm {Fix}_Y(H')$ also consists of three points $y_1,y_2,y_3$.

We now turn to an analysis of the branch curves of the 
covering $\pi: X \to X/\sigma =Y$.
\begin {proposition}
The set $\mathrm {Fix}_Y(H')$ is contained in a unique (connected) branch
curve $B_0$.
Its genus $g=g(B_0)$ is at least three.
\end {proposition}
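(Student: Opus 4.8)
The plan is to work upstairs on $X$, where we already know that $\mathrm{Fix}_X(H')=\{p_1,p_2,p_3\}$ consists of exactly three points, all lying in $\mathrm{Fix}_X(\sigma)$, and then transport the conclusion to $Y$ via $\pi$. Recall from Proposition \ref{AN-description} that $\mathrm{Fix}_X(\sigma)$ is a disjoint union of smooth connected curves, and that $\pi$ restricts to an isomorphism from $\mathrm{Fix}_X(\sigma)$ onto the branch locus $B$; in particular the connected components of $B$ correspond bijectively to those of $\mathrm{Fix}_X(\sigma)$ and carry the same genus. For each $i$ let $D^{(i)}$ denote the component of $\mathrm{Fix}_X(\sigma)$ containing $p_i$. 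Since $H'$ fixes $p_i$ while permuting the disjoint components, each $D^{(i)}$ is $H'$-invariant, and $H'$ acts on $D^{(i)}$ with fixed-point set $\{p_1,p_2,p_3\}\cap D^{(i)}$.

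The heart of the argument is to show that these three components coincide. First I would fix an element $c\in H$ of order three; since $H$ acts transitively on $\mathrm{Fix}_X(H')$ while $H'$ fixes it pointwise, $c$ permutes $p_1,p_2,p_3$ in a single $3$-cycle and hence permutes $D^{(1)},D^{(2)},D^{(3)}$ accordingly. As the assignment $p_i\mapsto D^{(i)}$ is $\langle c\rangle$-equivariant, its image is a single $C_3$-orbit of components and therefore has size one or three: either all three components agree or they are pairwise distinct. In the latter case each $D^{(i)}$ would contain exactly one of the points $p_j$ (namely $p_i$), so $H'\cong C_7$ would act on the smooth connected curve $D^{(i)}$ with exactly one fixed point.

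This is precisely the situation I expect to be the crux, and I would rule it out by a monodromy count for the cyclic quotient $D^{(i)}\to D^{(i)}/C_7$. Since $7$ is prime, the only points with nontrivial stabilizer are the $C_7$-fixed points, so a single fixed point would mean the cover is branched over exactly one point, with local monodromy a nonzero element of $\mathbb{Z}/7$. But for any connected $\mathbb{Z}/7$-cover of a smooth curve the sum of the local monodromies over the branch points must vanish in $\mathbb{Z}/7$: abelianizing the fundamental group of the punctured base turns its defining relation into exactly this congruence. A single nonzero monodromy cannot sum to zero, a contradiction. Hence the distinct case is impossible and $D_0:=D^{(1)}=D^{(2)}=D^{(3)}$ contains all three points.

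Finally, $C_7$ acts on the connected curve $D_0$ with precisely the three fixed points $p_1,p_2,p_3$, each totally ramified in $D_0\to D_0/C_7$. Writing $g'$ for the genus of the quotient, Riemann--Hurwitz gives
$$2g(D_0)-2 = 7\bigl(2g'-2\bigr)+3(7-1)=14g'+4,$$
so that $g(D_0)=7g'+3\geq 3$. Setting $B_0:=\pi(D_0)$, the isomorphism $\pi|_{\mathrm{Fix}_X(\sigma)}$ shows that $B_0$ is a connected branch curve of genus $g=g(D_0)\geq 3$ containing all of $\mathrm{Fix}_Y(H')=\{y_1,y_2,y_3\}$; since the components of $B$ are disjoint and each $y_i$ lies on only one of them, $B_0$ is the unique such curve. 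The genuine obstacle is the exclusion of the single-fixed-point case, since Riemann--Hurwitz alone permits a $C_7$-action with one fixed point numerically; it is the monodromy (existence) obstruction that is essential.
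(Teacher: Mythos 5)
Your proof is correct, but it takes a genuinely different route from the paper's in both halves of the statement. For uniqueness of the component, the paper works downstairs on $Y$: if the three branch curves were distinct, Nikulin's classification of $\mathrm{Fix}_X(\sigma)$ (Proposition \ref{AN-description}) forces at least two of them to be rational, and a $C_7$ stabilizing a $\mathbb{P}_1$ produces extra fixed points beyond the three allowed. You instead work upstairs on $X$ and reduce everything to the single classical fact that a connected cyclic cover of prime degree of a compact Riemann surface cannot be branched over exactly one point (the local monodromies must sum to zero in $\mathbb{Z}/7$), so $C_7$ cannot act on a component with exactly one fixed point; you rightly flag that Riemann--Hurwitz alone does not exclude this, since $g=7g'-3$ is numerically realizable. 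For the genus bound, the paper eliminates $g=0,1,2$ one at a time using that $H$ does not embed in $\mathrm{PGL}_2(\mathbb{C})$ and that a fixed-point-free structure is forced on elliptic curves, whereas you get $2g-2=7(2g'-2)+3\cdot 6$, hence $g=7g'+3\ge 3$, in one stroke. Your version is more self-contained (it needs only smoothness and disjointness of the $\sigma$-fixed curves, not the full force of Proposition \ref{AN-description}, nor the group theory of $H$ acting on low-genus curves) and yields the sharper conclusion $g\equiv 3 \pmod 7$; the paper's version stays on the quotient $Y$, which is the ambient setting for the rest of the section, and reuses facts that appear repeatedly elsewhere in the argument. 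The only point you leave implicit is that $H'$ acts nontrivially, hence effectively, on each component $D^{(i)}$ it stabilizes --- immediate because $\mathrm{Fix}_X(H')$ is finite while $D^{(i)}$ is a curve --- which is needed for the quotient maps to be genuine degree-seven covers.
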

\begin {proof}
Let $B_i$ be the connected branch curve which contains $y_i$ and observe
that $H$ acts transitively on the set $\mathcal B:=\{B_1,B_2,B_3\}$.
Since it is acting as $C_3$, the set $\mathcal B$ does not
consist of two elements.  If $\vert \mathcal B\vert =3$, then
from Proposition \ref{AN-description} it follows that at
least two of the $B_i$ must be rational.  Since each $B_i$ is
stabilized by $H'$, it then follows that $H'$ has at least two
additional fixed points contradicting $|\mathrm{Fix}_Y(H')|=3$.  
Therefore $\mathcal B$ consists of
one curve which we denote by $B_0$.

Now $B_0$ is $H$-invariant and, since it is acting as a group
of symplectic transformations, $H$ is acting effectively there.
Furthermore, $H'$ has exactly three fixed points in $B_0$.
Hence $B_0$ is not rational.  There is indeed an elliptic curve
with an effective $H$-action, but in such a case $H'$
must act by translations, i.e., fixed point free.  Thus $B_0$
is not elliptic.  If $g(B_0)=2$, then $B_0$ is hyperelliptic.
But since the quotient of $B_0\to \mathbb P_1$ by the hyperelliptic 
involution is $\mathrm {Aut}(B_0)$-equivariant and there is no effective
action of $H$ on $\mathbb P_1$, this is also not possible.
Consequently $g=g(B_0)\ge 3$ as claimed.
\end {proof}
Let $\{B_1,\ldots ,B_n\}$ be the set of image
curves which do not contain an $H'$-fixed point, i.e., $B_i\not=B_0$
for all $i$.  Again using Proposition \ref{AN-description} we
see that every $B_i$ is rational.  Thus, since $H'$ has no
fixed points in $\cup B_i$ for $i>0$, we observe that $H'$ is acting
freely on $\{B_1,\ldots ,B_n\}=\mathcal R$.  

Let us now first eliminate the possibility that an $H$-minimal model
of $Y$ is a $\mathbb P_1$-fiber space. 
\begin{lemma}
An $H$-minimal model of $Y$ is a del Pezzo surface.
\end{lemma}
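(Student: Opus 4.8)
The plan is to argue by contradiction and rule out the conic bundle alternative, since for a rational surface the equivariant minimal model program leaves only the two possibilities recorded in the introduction: $Y_\mathrm{min}$ is either a del Pezzo surface or an $H$-equivariant conic bundle over a smooth curve. So suppose that $Y_\mathrm{min}$ admits an $H$-equivariant morphism $p\colon Y_\mathrm{min}\to C$ whose general fiber is $\mathbb{P}_1$. As $Y$, and hence $Y_\mathrm{min}$, is rational, the base $C$ is $\mathbb{P}_1$, and $H$ acts on it through the induced action on the base of $p$.

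The first step is to pin down how $H'$ acts on the base. Since there is no effective action of $H$ on $\mathbb{P}_1$, the homomorphism $H\to \mathrm{Aut}(C)=\mathrm{PGL}_2(\mathbb{C})$ has nontrivial kernel $K$. This kernel is a normal subgroup of $H$, and the only normal subgroups of $H=C_3\ltimes C_7$ are $\{1\}$, $H'=C_7$ and $H$ itself; as $K\neq\{1\}$, in either remaining case $H'\subseteq K$. Thus $H'$ acts trivially on the base, i.e.\ it preserves every fiber of $p$.

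The second step is to exploit the fiberwise action. The group $H'\cong C_7$ acts faithfully on $Y$: any $h\in H'$ acting trivially on $Y=X/\sigma$ would satisfy $h(x)\in\{x,\sigma(x)\}$ for all $x$, forcing $h\in\{\mathrm{id},\sigma\}$ and hence $h=\mathrm{id}$, since $h$ is symplectic of order seven while $\sigma$ is antisymplectic. Because $\mu\colon Y\to Y_\mathrm{min}$ is an $H$-equivariant birational morphism, $H'$ also acts faithfully on $Y_\mathrm{min}$, and therefore nontrivially on the general fiber $F\cong\mathbb{P}_1$. A nontrivial action of $C_7$ on $\mathbb{P}_1$ has exactly two fixed points, so as $F$ varies over $C$ these fixed points sweep out a curve $\Gamma\subset\mathrm{Fix}_{Y_\mathrm{min}}(H')$ dominating the base; in particular $\mathrm{Fix}_{Y_\mathrm{min}}(H')$ contains a one-dimensional component.

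Finally I would transport this back to $Y$. The morphism $\mu$ is an isomorphism over a dense open set $U\subseteq Y_\mathrm{min}$, and $\Gamma\cap U$ is dense in $\Gamma$, so $\mu^{-1}(\Gamma\cap U)$ is a curve contained in $\mathrm{Fix}_Y(H')$. This contradicts the fact established above that $\mathrm{Fix}_Y(H')=\{y_1,y_2,y_3\}$ is finite, and shows that the conic bundle case cannot occur, leaving the del Pezzo case. The main obstacle is the second step: one must exclude the possibility that $H'$ acts trivially on the fibers as well as on the base, which is precisely what faithfulness of $H'$ on $Y_\mathrm{min}$ (inherited from $Y$ through the birational morphism) supplies, and one must check carefully that a one-dimensional fixed locus upstairs genuinely descends to $\mathrm{Fix}_Y(H')$ rather than being absorbed into the finitely many contracted curves.
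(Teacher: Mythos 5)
Your proof is correct and takes essentially the same route as the paper: both reduce to excluding the conic bundle case, use the absence of an effective $H$-action on $\mathbb{P}_1$ to force the normal subgroup $H'$ to act trivially on the base, and then produce infinitely many $H'$-fixed points from the fibers, contradicting $\vert\mathrm{Fix}_Y(H')\vert=3$. The only (harmless) difference is that the paper bypasses your faithfulness discussion by noting that $H'$ has at least two fixed points on each generic fiber whether or not it acts trivially there.
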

\begin{proof}
It is necessary to exclude the case of $Y_\mathrm{min}$ being an
$H$-equivariant conic bundle
$Y_\mathrm{min}\to \mathbb{P}_1$. 
Since there is no effective action of $H$ on $\mathbb P_1$, it follows
that the only proper normal subgroup $H'$ of $H$ acts trivially
on the base.  But the generic fibers 
are isomorphic to $\mathbb P_1$ and $H'$ must have at least 
two fixed points in each such fiber, contrary to it
having only three fixed points in $Y$. 
\end{proof}
\begin {proposition}
For every $B_i\in \mathcal R$ it follows that 
$\vert H.B_i\vert =7$ and there are at most two
such $H$-orbits in $\mathcal R$. In particular,
 $n\in \{0,7,14\}$.
\end {proposition}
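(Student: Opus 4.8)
The plan is to separate the group-theoretic determination of the possible orbit sizes from the geometric bound on the number of orbits. Since $H'\cong C_7$ acts freely on $\mathcal R$, as established just above, every $H'$-orbit consists of exactly seven disjoint rational curves, so that $7\mid n$. For a fixed $B_i\in\mathcal R$ the stabilizer $\mathrm{Stab}_H(B_i)$ meets $H'$ trivially and therefore injects into $H/H'\cong C_3$; hence it has order $1$ or $3$ and $\vert H.B_i\vert\in\{7,21\}$. It then remains to rule out orbits of length $21$ and to show that at most two orbits occur, after which both $\vert H.B_i\vert=7$ and $n\in\{0,7,14\}$ follow immediately.

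First I would record the numerical data attached to the branch curves. Because each $B_i$ lies in the branch locus, $\pi^*B_i=2R_i$ with $R_i$ a smooth rational $(-2)$-curve on $X$, so that $B_i^2=2R_i^2=-4$, while $B_0^2=2(2g-2)=4g-4>0$. The double-cover formula $K_X=\pi^*(K_Y+\frac{1}{2}B)$ together with $K_X=0$ gives $-2K_Y=B_0+\sum_{i=1}^n B_i$ in $\mathrm{Pic}(Y)$, and since the branch curves are pairwise disjoint one reads off $K_Y\cdot B_i=2$ and $K_Y^2=g-1-n$. I would also note that $B_0$, having positive self-intersection and genus $g\geq 3$, can never occur as a $(-1)$-curve at any stage of the reduction, hence is never contracted and descends to a genus-$g$ curve $\bar B_0$ on the del Pezzo surface $Y_\mathrm{min}$.

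The decisive step is to pass to $Y_\mathrm{min}$ and test the relation above against the ample class $-K_{Y_\mathrm{min}}$. Writing $p\colon Y\to Y_\mathrm{min}$ for the Mori-reduction and pushing forward yields $-2K_{Y_\mathrm{min}}=\bar B_0+\sum_i\bar B_i$ with $\bar B_i=p_*B_i$. Intersecting with $-K_{Y_\mathrm{min}}$ gives $2d=(-K_{Y_\mathrm{min}}\cdot\bar B_0)+\sum_i(-K_{Y_\mathrm{min}}\cdot\bar B_i)$, where $d=K_{Y_\mathrm{min}}^2\leq 9$. As $-K_{Y_\mathrm{min}}$ is ample, each surviving branch curve contributes at least $1$, and by $H$-equivariance this contribution is constant along each $H$-orbit. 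An orbit of length $21$ would thus force $2d\geq 1+21>18$, which is impossible, and the same inequality shows that at most two orbits of length $7$ can survive in $Y_\mathrm{min}$. This is exactly the asserted conclusion, provided no entire $H$-orbit among the $B_i$ is contracted along $Y\to Y_\mathrm{min}$.

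The main obstacle is therefore to control the branch curves that might be contracted during the reduction: a priori a $(-4)$-curve $B_i$ can become a $(-1)$-curve once other curves meeting it are contracted, and then be contracted itself, yielding a Mori-fiber contained in $B$. To handle this I would invoke Proposition \ref{at most two}: every Mori-fiber $E\in\mathcal E$ meets $B$ transversally in at most two points, so each such contraction raises $B_i^2$ by a controlled amount, and the curves effecting it again come in $H$-orbits. Tracking these contractions $H$-equivariantly should show that no orbit of the $B_i$ can be removed entirely without violating either this transversality bound or the degree bound $d\leq 9$, so that every orbit survives and the intersection estimate of the preceding paragraph applies. Carrying out this equivariant bookkeeping for the contracted curves is where the real work lies.
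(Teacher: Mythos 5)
Your first paragraph is fine and matches the paper's starting point: $H'$ acts freely on $\mathcal R$, so the stabilizer of $B_i$ injects into $H/H'\cong C_3$ and $\vert H.B_i\vert\in\{7,21\}$. But the rest of the argument does not close, and you say so yourself: the entire strategy of intersecting $-2K_{Y_\mathrm{min}}=\bar B_0+\sum_i\bar B_i$ with the ample class $-K_{Y_\mathrm{min}}$ only rules out orbits that \emph{survive} the reduction, and you explicitly defer the case where an $H$-orbit of $(-4)$-curves $B_i$ is contracted (after enough Mori-fibers meeting it have been blown down to raise its self-intersection to $-1$). That deferred case is not a routine loose end: controlling it requires exactly the kind of equivariant counting of Mori-fibers that the paper spends Lemma \ref{estimate}, Lemma \ref{B_i} and the proof of Proposition \ref{n=0} on, and at this stage of the paper none of that machinery is available. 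As written, the proposal proves only that no orbit of length $21$, and no third orbit of length $7$, can survive to $Y_\mathrm{min}$ --- which is strictly weaker than the statement.

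The paper's own proof avoids the quotient surface entirely and is a one-line lattice argument on $X$: the curves in $\mathcal R$ are pairwise disjoint smooth rational curves in the branch locus, so their preimages are pairwise disjoint $(-2)$-curves on the K3-surface $X$, spanning a negative-definite sublattice of $\mathrm{Pic}(X)$ of rank equal to $n$ (respectively $\vert H.B_i\vert$). Since $\mathrm{Pic}(X)$ has signature $(1,\rho-1)$ with $\rho\le 20$, no negative-definite sublattice of rank $21$ exists, which kills both an orbit of length $21$ and a third orbit of length $7$ in one stroke, with no reference to the Mori-reduction and hence no contraction issue to worry about. If you want to rescue your approach you would need to import the later combinatorial bounds on $\vert\mathcal E_{\mathrm{Mori}}\vert$; the lattice bound on $X$ is both shorter and logically prior.
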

\begin {proof}
Since $\mathrm{Pic}(X)$ does not admit a 
negative-definite sublattice of rank $21$ and $H'$ stabilizes no curve in $\mathcal R$, 
the only possibility is $\vert H.B_i\vert =7$ and $n \in \{0,7,14\}$. 
\end {proof}
The following is the main result of this section.
\begin {proposition}\label{n=0}
There are no branch curves other than the one which
contains $\mathrm {Fix}_Y(H')$, i.e., $n=0$ and $B = B_0$.
\end {proposition}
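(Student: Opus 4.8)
The plan is to suppose $n>0$, so that $n\in\{7,14\}$ by the preceding proposition, and to derive a contradiction from the numerical invariants of the Mori-reduction $\phi\colon Y\to Y_{\min}$ together with the incidence bound of Proposition \ref{at most two}. Each rational branch curve $B_i$ ($i\ge1$) is the isomorphic image of a smooth rational curve $R_i\subset\mathrm{Fix}_X(\sigma)$; from $\pi^*B_i=2R_i$ and $R_i^2=-2$ one gets $B_i^2=-4$, and the $B_i$ are mutually disjoint and disjoint from $B_0$. Since $K_X=0$, the double-cover formula gives $B=B_0+\sum_{i\ge1}B_i\sim-2K_Y$, and $24=2e(Y)-\sum e(C)$ yields $e(Y)=13-g+n$, hence $\rho(Y)=11-g+n$ and $K_Y^2=g-1-n$; writing $d=K_{Y_{\min}}^2$ we obtain $|\mathcal E_{\mathrm{Mori}}|=d-g+1+n$. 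As $\rho(Y)\le\rho(X)\le20$ while the fixed-locus classification (Proposition \ref{AN-description}) forces $g+n\le11$, the case $n=14$ is impossible ($g\le-3$); so it remains to exclude $n=7$.

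A del Pezzo surface contains no curve of self-intersection below $-1$, so I would argue that each $(-4)$-curve $B_i$ must be met by contracted $(-1)$-curves until its self-intersection is raised to $\ge-1$ (or, if $B_i$ is itself eventually contracted, until it first becomes a $(-1)$-curve). The contracting curves come in $H$-orbits of disjoint $(-1)$-curves; an $H$-invariant $(-1)$-curve is impossible (since $H$ has no effective action on $\mathbb P_1$ and $H'$ fixes no curve pointwise), so every orbit has size $3$, $7$ or $21$. A size-$3$ orbit has stabiliser $H'$, so each of its curves is $H'$-stable and meets $B$ only in the two $H'$-fixed points lying on $B_0$; thus size-$3$ orbits are disjoint from $\bigcup_{i\ge1}B_i$ and cannot repair the $(-4)$-curves. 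Applying the congruence $e(Y)\equiv e(\mathrm{Fix}_Y(H'))=3\ (\mathrm{mod}\ 7)$ to $e(Y)=20-g$ gives $g\equiv3\ (\mathrm{mod}\ 7)$, and with $3\le g\le11-n=4$ this forces $g=3$. Then $K_Y^2=-5$ and $|\mathcal E_{\mathrm{Mori}}|=d+5\le14$, so the contractions consist of at most two $H$-orbits of size $7$ (no size-$21$ orbit fits).

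The heart of the argument is then a counting contradiction. For each surviving $B_i$, ampleness of $-K_{Y_{\min}}$ gives $0<-K_{Y_{\min}}\cdot\phi_*B_i=-K_Y\cdot B_i+\mu_i=-2+\mu_i$, where $\mu_i$ is the total multiplicity of $B_i$ along the contracted curves; hence $\mu_i\ge3$. For a $B_i$ that is itself contracted one instead needs its self-intersection raised by exactly $3=1+1+1$, again an incidence of $3$ (a single tangency would overshoot to $0$). Summing, the seven $(-4)$-curves demand total incidence $\ge21$ with the size-$7$ repairing orbit(s). On the other hand each repairing $(-1)$-curve $E$ lies in $\mathcal E$ and satisfies $E\cdot B=2$ by Proposition \ref{at most two}, so one size-$7$ orbit contributes incidence at most $7\cdot2=14$ to $\bigcup_{i\ge1}B_i$. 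When $|\mathcal E_{\mathrm{Mori}}|=7$ the $B_i$ cannot be contracted (they are the only $(-4)$-curves and size-$3$ orbits miss them), so they survive and are repaired by this single orbit, giving $21\le14$; when $|\mathcal E_{\mathrm{Mori}}|=14$ one has $Y_{\min}=\mathbb P_2$, and comparing degrees in $B\sim6\ell-2\sum e_j$ (for a geometric basis $\ell,e_1,\dots$ of $\mathrm{Pic}(Y)$) shows $\deg\phi_*B_i=0$, so all seven $B_i$ are contracted and form one size-$7$ orbit repaired by the other, again $21\le14$. This excludes $n=7$, whence $n=0$ and $B=B_0$.

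The step I expect to be the main obstacle is making this incidence count fully rigorous: one must justify the bound ``$\ge3$ incidences per $B_i$'' uniformly --- via $-K_{Y_{\min}}$-positivity for the survivors and via the requirement of reaching self-intersection exactly $-1$ for the contracted ones --- and then control how the intersection numbers $B_i\cdot E$ and the branch locus evolve through successive contractions, so that Proposition \ref{at most two} (which applies verbatim only to Mori-fibers as curves in $Y$) still caps each contributing orbit's incidence at $14$. The bookkeeping of total versus strict transforms in the non-simultaneous ($d=9$) case, where the two size-$7$ orbits may be infinitely near, is the delicate point.
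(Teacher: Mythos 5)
Your route is genuinely different from the paper's. The paper pins down the local combinatorics (every $B_i\in\mathcal R$ meets exactly one Mori-fiber, $I_{B_i}$-invariance of the fibers meeting $B_i$, then a case analysis on tangency) and contradicts the counting bound of Lemma \ref{estimate}; you instead run a global incidence count, playing $-K_{Y_{\mathrm{min}}}\cdot\phi_*B_i\ge 1$ (which forces $\mu_i\ge 3$) against the cap $(E,B)\le 2$ from Proposition \ref{at most two}. For $n=7$ this skeleton does work, and the ``delicate point'' you flag largely evaporates: since size-$3$ orbits of Mori-fibers are disjoint from $\bigcup_{i\ge1}B_i$ (your argument for this is correct), the multiplicities entering $\mu_i$ really are the intersection numbers $(B_i,E)$ computed in $Y$, and after your degree argument in the $d=9$ case there is only one ``repairing'' size-$7$ orbit, contracted before anything that touches the $B_i$, so no infinitely-near bookkeeping arises. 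You should, however, state explicitly why the intermediate configurations $|\mathcal E_{\mathrm{Mori}}|\in\{6,9,10,12,13\}$ are covered (they are, by the same remark that only size-$7$ orbits can repair) and why two repairing size-$7$ orbits cannot occur.

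The genuine gap is the inequality $g+n\le 11$, which you attribute to Proposition \ref{AN-description}. That proposition only describes the shape of $\mathrm{Fix}_X(\sigma)$ and contains no numerical relation between $g$ and $n$; the bound you want is Nikulin's lattice-theoretic formula $g=11-(r+a)/2$, $n=(r-a)/2$, hence $g+n=11-a\le 11$, which appears nowhere in the paper --- indeed the authors explicitly decline to use even the weaker fact that $\mathrm{Fix}_X(\sigma)$ has at most $10$ components, in order to stay self-contained. This inequality is your only mechanism for killing $n=14$, and it is also what delivers $g=3$ when $n=7$; as written, the case $n=14$ is simply not proved. The gap is repairable within your framework: for $n=7$ the congruence $g\equiv 3\ (\mathrm{mod}\ 7)$ leaves $g\in\{3,10,17\}$, and $g=10$ gives $|\mathcal E_{\mathrm{Mori}}|\le 7$ while $g=17$ gives $|\mathcal E_{\mathrm{Mori}}|\le 0$, both of which fall to the same count; for $n=14$ one finds $g\in\{10,17,24\}$ and must redo the count with fourteen $(-4)$-curves demanding at least $42$ incidences against at most two size-$7$ orbits supplying at most $28$. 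The paper instead disposes of $n=14$ by the combinatorial lemmas preceding Proposition \ref{n=0}; either way, this case needs an argument you have not supplied.
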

Since the proof requires several combinatorial arguments, for
the sake of clarity we separate it into a number of steps.  Throughout
we assume that $n\not=0$ and at the end reach a contradiction.
This assumption implies that $\mathcal R$ consists of either
one or two $H$-orbits consisting of seven curves and
$n=7,14$, respectively. We make extensive use of the results in section \ref {branch curves and mori-fibers} and the fact that an irreducible curve on a del Pezzo surface has self-intersection $\geq -1$. The desired contradiction will be
that $\vert \mathcal {E}_{\mathrm {Mori}}\vert $ is larger than
the estimate guaranteed by the following observation.
\begin {lemma}\label{estimate}
If $n=7$, then $\vert \mathcal {E}_{\mathrm {Mori}}\vert $
is at most $13$. If $n=14$ , then $\vert \mathcal {E}_{\mathrm {Mori}}\vert $
is at most $20$.
\end {lemma}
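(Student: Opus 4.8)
The plan is to extract the bound from the single topological identity already used in the Remark above, now keeping track of the genus of the branch locus, and then to improve the del Pezzo input by one unit.

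First I would write down the Euler characteristic of the double cover precisely. The ramification locus $\mathrm{Fix}_X(\sigma)=D_g\cup\bigcup_{i=1}^n R_i$ maps isomorphically onto $B=B_0\cup\bigcup_{i=1}^n B_i$, so from $24=e(X)=2e(Y)-\sum_C e(C)$ and the fact that the components are smooth and disjoint one gets $\sum_C e(C)=(2-2g)+2n$ and hence
\[
e(Y)=13-g+n .
\]
Combined with $e(Y)=e(Y_{\mathrm{min}})+|\mathcal E_{\mathrm{Mori}}|$, which holds because every extremal contraction removes a connected $(-1)$-curve and so drops $e$ by one, this yields the exact count
\[
|\mathcal E_{\mathrm{Mori}}|=13-g+n-e(Y_{\mathrm{min}}).
\]

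Next I would feed in the two geometric facts. The preceding proposition gives $g=g(B_0)\ge 3$ (indeed Riemann--Hurwitz for $B_0\to B_0/H'$, totally ramified over the three $H'$-fixed points, forces $g\equiv 3\pmod 7$, so $3$ is the smallest value). Since $Y_{\mathrm{min}}$ is a del Pezzo surface, $e(Y_{\mathrm{min}})=12-K_{Y_{\mathrm{min}}}^2\ge 3$. Using $g\ge 3$ and $e(Y_{\mathrm{min}})\ge 3$ already bounds $|\mathcal E_{\mathrm{Mori}}|$ by $7+n$; substituting $g=3$ and $e(Y_{\mathrm{min}})=4$ reproduces exactly the asserted values $13$ and $20$. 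The sharp statement therefore rests on the extra unit $e(Y_{\mathrm{min}})\ge 4$, i.e. on the claim that in the present situation ($n\ne 0$) the minimal model is not $\mathbb P_2$.

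The exclusion of $Y_{\mathrm{min}}=\mathbb P_2$ is the step I expect to be the real obstacle, since $\mathbb P_2$ does occur as the minimal model in the relevant examples with $n=0$. The plan is to use the $n\ge 7$ mutually disjoint rational branch curves $B_i$: on $\mathbb P_2$ any two curves meet, so a reduction terminating at $\mathbb P_2$ would be forced to contract almost all of them, turning them into Mori-fibers and hence into $(-4)$-curves by the self-intersection remark, while $B_0$ together with the surviving components must still represent the full class $B\in|-2K_Y|$. Balancing this against the lattice constraint $\rho(Y)=11-g+n\le\rho(X)\le 20$ of the K3-surface should leave no admissible configuration with $Y_{\mathrm{min}}=\mathbb P_2$, giving $e(Y_{\mathrm{min}})\ge 4$ and thus the stated bounds. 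The Euler-characteristic identity and the genus input are routine; the delicate part is this incompatibility bookkeeping.
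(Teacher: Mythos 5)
Your first half coincides with the paper's own argument: the Euler-characteristic identity $|\mathcal E_{\mathrm{Mori}}| = 13 - g(B_0) + n - e(Y_{\mathrm{min}})$ together with $g(B_0)\ge 3$ and $e(Y_{\mathrm{min}})\ge 3$ gives $|\mathcal E_{\mathrm{Mori}}|\le n+7$, and you correctly isolate the remaining task as excluding the single extremal configuration $g=3$, $e(Y_{\mathrm{min}})=3$ (so $Y_{\mathrm{min}}=\mathbb P_2$), $|\mathcal E_{\mathrm{Mori}}|=n+7$. But that exclusion is exactly where your proposal stops being a proof: you write that the bookkeeping ``should leave no admissible configuration'' without exhibiting the contradiction, and the two mechanisms you offer do not suffice. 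The lattice bound $\rho(Y)=11-g+n\le 20$ is vacuous in the critical case $n=7$, $g=3$, where it gives only $\rho(Y)=15$. And the alternative of ``contracting almost all of the $B_i$'' is not available: each $B_i$ is a $(-4)$-curve, so before it could be contracted the reduction would first have to contract at least three further Mori-fibers meeting it, and doing this along a whole $H$-orbit of seven such curves already costs more than the budget $|\mathcal E_{\mathrm{Mori}}|=n+7$ permits. (This is precisely how the paper shows $\mathcal E_{\mathrm{Mori}}\cap\mathcal R=\emptyset$ in the extremal case.)

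The missing idea is the paper's final combinatorial step: once no $B_i$ is contracted, the pairwise disjoint curves $B_1,\dots,B_n$ must map to curves in $\mathbb P_2$ any two of which intersect; a new intersection point between the images of two disjoint curves can only be created at the image of a contracted Mori-fiber meeting both of them, directly or through a chain, and Remark \ref{self-int of Mori-fibers} (Mori-fibers meeting $B$ are $(-1)$-curves, those disjoint from $B$ are $(-2)$-curves, and two Mori-fibers can only meet off $B$) shows that no configuration of at most $n+7$ Mori-fibers can connect all $\binom{n}{2}$ pairs. Without this step, or a genuine substitute for it, the lemma is not established. Note also that you are aiming at a stronger statement than needed, namely $Y_{\mathrm{min}}\ne\mathbb P_2$ whenever $n\ne 0$; the paper only rules out the extremal configuration, which is all the lemma requires, and your route would still have to pass through essentially the same intersection-pattern analysis.
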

\begin {proof}
Assume that $\vert \mathcal {E}_{\mathrm {Mori}}\vert $ is larger
than the claim.  Then,
using the Euler characteristic formula 
\begin {equation} \label{Euler characteristic formula}
13-g(B_0)=e(Y_\mathrm{min})+\vert \mathcal E _{\text{Mori}} \vert -n
\end {equation}
along with $g(B_0) \ge 3$ and $e(Y_\mathrm{min})\ge 3$, we see that $g=3$, $n=7$ implies
$\vert \mathcal E_{\text{Mori}} \vert = 14$, $n=14$ implies
$\vert \mathcal E_{\text{Mori}} \vert = 21$, and $e(Y_\mathrm{min})=3$. 
Consequently, $Y_{\mathrm{min}}=\mathbb P_2$. The specified values of
$n$ and  $\vert \mathcal E_{\text{Mori}} \vert $ guarantee $\mathcal E
_{\text{Mori}} \cap \mathcal R =\mathcal E
_{\text{Mori}} \backslash \mathcal E = \emptyset$. To see this,
e.g., in the case where $n=7$, note that if one curve $B_i$ is
contracted to a point by the reduction $Y\to Y_\mathrm{min}$, then all
curves in $\mathcal R$ are contracted and $|\mathcal E
_{\text{Mori}} \backslash \mathcal E| =7$. Since $B_i^2 = -4$, more than seven additional
Mori-fibers are required in order to come to a step in the
Mori-reduction where the curves $B_i$ can be contracted.
So all branch curves are mapped to curves in $Y_\mathrm{min}=\mathbb P_2$.
However, using Remark \ref{self-int of Mori-fibers}
one sees that in this 
situation there is no configuration of Mori-fibers  
such that the images
in $Y_{\mathrm{min}}=\mathbb P_2$ of every pair of branch curves 
have nonempty intersection.  
\end {proof}
Now let $B_i$ be any branch curve in $\mathcal R$ 
and $I_{B_i}:=\mathrm {Stab}_H(B_i)$ It follows that $I_{B_i} \cong C_3$.
\begin {lemma}\label{B_i}
Every Mori-fiber which meets $B_i$ is $I_{B_i}$-invariant 
and meets no other branch curve
in the orbit $H.B_i$.  Furthermore, $B_i$
meets at most two Mori-fibers in $\mathcal E$.
\end {lemma}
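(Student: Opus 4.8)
\textit{The plan} is to deduce all three assertions from the single fact that any Mori-fiber $E$ meeting $B_i$ must be $I_{B_i}$-invariant. Before proving that I would record two structural inputs. First, $I_{B_i}\cong C_3$ acts effectively on $B_i\cong\mathbb P_1$: a nontrivial element fixing $B_i$ pointwise would, as an element of $H$ acting on $X$, be a nontrivial symplectic automorphism of order three fixing the ramification curve $\pi^{-1}(B_i)$ pointwise (since $\pi$ is one-to-one over $B$ and $H$-equivariant), which is impossible because a finite-order symplectic automorphism of a K3-surface has only isolated fixed points. Hence $I_{B_i}$ has exactly two fixed points $q_1,q_2$ on $B_i$. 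Second, because $H=C_3\ltimes C_7$ is the \emph{nontrivial} semidirect product, a generator of $I_{B_i}$ acts on the orbit $H.B_i$ (an $H'$-torsor) through the order-three automorphism of $H'\cong C_7$, which fixes only the identity; thus $B_i$ is the unique curve of its orbit stabilized by $I_{B_i}$. Throughout I would use that a Mori-fiber meeting $B_i$ lies in $\mathcal E$ (the branch curves are pairwise disjoint by Proposition \ref{AN-description}), is therefore a $(-1)$-curve meeting $B$ in at most two points by Proposition \ref{at most two}, and that by Remark \ref{self-int of Mori-fibers} two distinct Mori-fibers of $\mathcal E$ meet only away from $B$.

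For invariance, let $E$ meet $B_i$. I would first observe that $E$ is not $H'$-invariant: as $H'$ permutes $H.B_i$ transitively, an $H'$-invariant $E$ would meet all seven curves of the orbit, hence $B$ in at least seven points, contradicting Proposition \ref{at most two}. Since $H'\cong C_7$ has prime order this gives $\mathrm{Stab}_{H'}(E)=1$, so $|H.E|\in\{7,21\}$. The $H$-orbit of $E$ is contracted in a single extremal step, hence consists of $|H.E|$ disjoint Mori-fibers, so $|H.E|\le|\mathcal E_{\mathrm{Mori}}|$, which is at most $13$ if $n=7$ and at most $20$ if $n=14$ by Lemma \ref{estimate}; in either case $|H.E|<21$, forcing $|H.E|=7$ and $\mathrm{Stab}_H(E)\cong C_3$. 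Writing $c'$ for a generator of this stabilizer, the invariant curve $E=c'E$ meets each of $B_i,\,c'B_i,\,(c')^2B_i$; were these three distinct they would contribute three points to $E\cap B$, again contradicting Proposition \ref{at most two}. Hence $c'$ fixes $B_i$, so $\mathrm{Stab}_H(E)=I_{B_i}$ and $E$ is $I_{B_i}$-invariant.

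Granting invariance, the remaining assertions follow quickly. If $E$ met a second curve $B_j$ of the orbit, invariance would force $E$ to meet $B_j,c'B_j,(c')^2B_j$ as well; since $E\cap B$ has at most two points and already meets $B_i$, these three curves must coincide, i.e. $B_j$ would be $I_{B_i}$-stable, contradicting that $B_i$ is the only $I_{B_i}$-fixed curve of its orbit. This proves $E$ meets no other branch curve of $H.B_i$. Finally, $E\cap B_i$ is an $I_{B_i}$-stable subset of $B_i$ with at most two points; since every nontrivial $C_3$-orbit has three points, these points are fixed and hence lie in $\{q_1,q_2\}$. By Remark \ref{self-int of Mori-fibers} no two distinct Mori-fibers share a point of $B\supseteq B_i$, so each of $q_1,q_2$ lies on at most one Mori-fiber; therefore at most two Mori-fibers meet $B_i$.

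The main obstacle is the invariance step, and within it the elimination of the $21$-element orbit: everything downstream is formal once a meeting fiber is known to be $I_{B_i}$-invariant. The leverage there is numerical—the global bound on $|\mathcal E_{\mathrm{Mori}}|$ from Lemma \ref{estimate} together with the local bound $(E,B)\le 2$ from Proposition \ref{at most two}—so the delicate point is to be certain the two estimates interlock, i.e. that the orbit $H.E$ really is contracted within a single extremal step and hence counts inside $\mathcal E_{\mathrm{Mori}}$.
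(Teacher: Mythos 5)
Your proof is correct and follows essentially the same route as the paper's: Lemma \ref{estimate} forces a nontrivial stabilizer of any Mori-fiber meeting $B_i$, Proposition \ref{at most two} identifies that stabilizer with $I_{B_i}$, and the uniqueness of $B_i$ as the $I_{B_i}$-fixed curve of its orbit together with Remark \ref{self-int of Mori-fibers} gives the remaining two claims. The ``delicate point'' you flag at the end is harmless: the counting only needs $H.E\subseteq\mathcal{E}_{\mathrm{Mori}}$ with the orbit elements distinct, which is immediate because the reduction is $H$-equivariant and the set of Mori-fibers is therefore $H$-stable.
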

\begin {proof} 
Suppose there is an orbit $I_{B_i}.E_1=\{E_1,E_2,E_3\}$ consisting of 
three different Mori-fibers which
meet $B_i$. Let $S:=\mathrm {Stab}_H(E_1)$. It follows from Lemma \ref{estimate} above 
that $S$ is nontrivial. If $S$ does not stabilize $B_i$, then $\vert S.B_i\vert \ge 3$,
contrary to $\vert E_1\cap B\vert \le 2$ (cf. Proposition \ref{at most two}).  Thus
$S=I_{B_i}$ and $I_{B_i}.E_1 = \{E_1\}$. If $E_1$ meets any other
branch curve in $H.B_i$, then it meets at least three
others, again contrary to $\vert E_1\cap B\vert \le 2$.
Finally, if $B_i$ meets more than two Mori-fibers in $\mathcal E$, then at
least one Mori-fiber $E$ meets $B_i$ outside
$\mathrm{Fix}_{B_i}(I_{B_i})$. Since $E$ is $I_{B_i}$-invariant, it follows that $E \cap B$ consists of more than three points, a contradiction.
\end {proof}
\begin {lemma}
Every $B_i\in \mathcal R$ meets exactly one Mori-fiber
in $\mathcal E$.
\end {lemma}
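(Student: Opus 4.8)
The plan is to sharpen Lemma~\ref{B_i}, which already gives ``at most two'', by excluding both zero and two. The computation I would put first is that every $B_i\in\mathcal R$ is a $(-4)$-curve: its preimage $R_i=\pi^{-1}(B_i)$ is a smooth rational, hence $(-2)$-, curve on the K3-surface $X$, and the ramification formula $\pi^*B_i=2R_i$ gives $2B_i^2=(\pi^*B_i)^2=4R_i^2=-8$. I would also record at the outset that, since $\mathrm{Fix}_X(\sigma)$ is a disjoint union of smooth curves (Proposition~\ref{AN-description}), the branch curves are pairwise disjoint; in particular any Mori-fiber distinct from $B_i$ that meets $B_i$ is not itself contained in the branch locus, so it lies in $\mathcal E$.

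To rule out that $B_i$ meets no Mori-fiber in $\mathcal E$, suppose it did not. Because the branch curves are disjoint, no contracted curve would then meet $B_i$ at any stage of the reduction $Y\to Y_{\mathrm{min}}$, so its self-intersection would remain $-4$ and $B_i$ would map isomorphically onto an irreducible $(-4)$-curve on the del Pezzo surface $Y_{\mathrm{min}}$. This contradicts the fact that an irreducible curve on a del Pezzo surface has self-intersection at least $-1$. Hence $B_i$ meets at least one Mori-fiber in $\mathcal E$.

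It remains to exclude the case that $B_i$ meets two Mori-fibers $E_1,E_2\in\mathcal E$, necessarily at the two fixed points of $I_{B_i}\cong C_3$ on $B_i$. By Lemma~\ref{B_i} each $E_j$ is $I_{B_i}$-invariant and so has an $H$-orbit of length seven; moreover $E_1$ and $E_2$ lie in distinct orbits, since within one orbit only the member stabilized by $I_{B_i}$ can meet $B_i$. Thus $H.E_1$ and $H.E_2$ together are fourteen distinct Mori-fibers. When $n=7$ this already gives $|\mathcal E_{\mathrm{Mori}}|\ge 14$, contradicting the bound $|\mathcal E_{\mathrm{Mori}}|\le 13$ of Lemma~\ref{estimate}, and we are done. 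When $n=14$ the estimate only gives $|\mathcal E_{\mathrm{Mori}}|\le 20$ and is not immediately violated, because a single orbit of Mori-fibers can meet branch curves in \emph{both} $H$-orbits of $\mathcal R$. Here I would descend to $X$: a relevant Mori-fiber $E$ meets $B$ in at least two points (a double cover of $E\cong\mathbb P_1$ must have branch divisor of even degree), so by Lemma~\ref{preimage of E in X} each $\tilde E=\pi^{-1}(E)$ is a smooth $(-2)$-curve. Together with the $(-2)$-curves $R_j=\pi^{-1}(B_j)$ these form a configuration of $(-2)$-curves on $X$ whose dual graph is bipartite and, by Proposition~\ref{at most two} and the hypothesis that each $B_j$ meets two Mori-fibers, $2$-regular; in the transverse case it is a union of cycles, i.e.\ fibers of type $I_k$. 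A rank count then produces more independent classes in $\mathrm{Pic}(X)$ than the bound $21$ permits, contradicting the fact that $\mathrm{Pic}(X)$ has rank at most $20$.

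The main obstacle is precisely this $n=14$ case. Once Mori-fibers may be shared between the two branch-orbits, the crude count no longer beats the Euler-characteristic estimate, and one is forced onto $X$ to exploit the restriction on $\mathrm{Pic}(X)$. The two delicate points I anticipate are: verifying that the Mori-fibers in question meet $B$ in at least two points so that Lemma~\ref{preimage of E in X} applies (the tangential case of Proposition~\ref{at most two} must be treated separately, where $\tilde E$ splits into two $(-2)$-curves and the dual graph changes accordingly); and checking, across the sub-cases distinguished by where the second intersection point lies (on the other branch-orbit or on $B_0$), that the induced configuration of $(-2)$-curves always spans a subgroup of $\mathrm{Pic}(X)$ of rank exceeding $20$.
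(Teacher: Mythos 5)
Your first two steps are sound and coincide with the paper's: the $(-4)$-curve computation forces every $B_i\in\mathcal R$ to meet at least one Mori-fiber, and in the case $n=7$ the two distinct $H$-orbits $H.E_1$, $H.E_2$ of length seven give $14>13$ Mori-fibers, contradicting Lemma~\ref{estimate}. The genuine gap is in the case $n=14$, which is the crux of the lemma, and there your argument both diverges from the paper and does not close. First, the claim that every relevant Mori-fiber meets $B$ in at least two points is not established by your parity argument and is false as stated: if $E$ meets $B$ in a single transversal point, the parity obstruction is resolved not by a second intersection point but by $\pi^{-1}(E)$ being \emph{reducible} (two $(-2)$-curves exchanged by $\sigma$ and meeting over $E\cap B$, consistent with $E^2=-1$); so Lemma~\ref{preimage of E in X} need not apply and your configuration of irreducible $(-2)$-curves on $X$ is not the one that actually occurs. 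Second, the asserted $2$-regularity of the bipartite dual graph is unjustified: a Mori-fiber meeting $B$ in two points may have one of them on $B_0$ rather than on a curve of $\mathcal R$, and the hypothesis only gives that the curves in $H.B_i$ meet two Mori-fibers -- the second orbit $H.B_k$ may meet only one each -- so the graph need not decompose into cycles. Third, the decisive rank count in $\mathrm{Pic}(X)$ is asserted, not performed; you yourself list the required sub-case analysis as an anticipated difficulty. (The count can be made to work in the cleanest sub-case -- seven cycles of length four give Gram rank $21>20$ -- but a cycle of length two would only contribute rank one, so the verification is genuinely needed, not cosmetic.)

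The missing idea, which is how the paper disposes of $n=14$ without ever leaving $Y$, is that contracting only $E_1$ and $E_2$ raises $B_i^2$ from $-4$ to $-2$, still inadmissible for an irreducible curve on the del Pezzo surface $Y_{\mathrm{min}}$; hence some further Mori-fiber $E_3$ must meet $E_1$ or $E_2$. By Remark~\ref{self-int of Mori-fibers} such an $E_3$ is a $(-2)$-curve disjoint from $B$, hence lies in neither $H.E_1$ nor $H.E_2$ and contributes a third orbit of seven, giving $|\mathcal E_{\mathrm{Mori}}|\ge 21>20$; the tangential sub-case is handled by the same counting after observing via Proposition~\ref{at most two} that a tangent Mori-fiber meets no other branch curve, so the second branch orbit forces yet another seven fibers. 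This keeps the whole argument at the level of the estimate of Lemma~\ref{estimate} and avoids any lattice computation on $X$. I would recommend either adopting this contraction argument or, if you want to keep the Picard-rank route, carrying out the case analysis you deferred, beginning with a correct description of $\pi^{-1}(E)$ when $|E\cap B|=1$.
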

\begin {proof}
First note that a $(-4)$-curve in $Y$ cannot be mapped biholomorphically to the del Pezzo surface $Y_\mathrm{min}$ and therefore every curve $B_i \in \mathcal R$ meets at least one
Mori-fiber.
Assume some $B_i$ meets at least three Mori-fibers.
Since none of these meets another branch curve in
the orbit $H.B_i$ by Lemma \ref{B_i}, we obtain $\vert \mathcal E_{\mathrm {Mori}}\vert \ge 21$,
contrary to Lemma \ref{estimate}.  If $n=7$, then the
same argument proves the desired result. It remains to consider the
case $n=14$.
If $B_i$ meets
two Mori-fibers $E_1,E_2$ and both intersections are transversal,
then, since the image of $B_i$ in the del Pezzo surface
$Y_{\mathrm {min}}$ cannot be a $(-2)$-curve, at least one of
these two Mori-fibers meets a third one $E_3$. Since $E_1^2=E_2^2=-1$ and $E_3^2 \leq -2$ (cf. Remark \ref{self-int of Mori-fibers}) this Mori-fiber $E_3$ is
not among $H.E_1$ or $H.E_2$ and by Lemma \ref{B_i} the full configuration consists of at
least 21 Mori-fibers. If $B_i$ meets two Mori-fibers $E_1,E_2$ and
$E_1$ is tangent to $B_i$, then Proposition \ref{at most two} ensures that $E_1$ meets no other curve in $\mathcal
R$. Even if $E_2$ meets a branch curve $B_k$ in the other $H$-orbit $\mathcal R \backslash (H.B_i)$ additional Mori-fibers meeting $H.B_k$ are required and again the total number of Mori-fibers exceeds 21.
\end {proof}
We have now reduced to the situation where every branch
curve $B_i \in \mathcal R$ meets exactly one Mori-fiber $E\in \mathcal E$. This final
case requires a closer look at the intersection diagram of branch
curves and Mori-fibers: each $B_i \in \mathcal R$ fulfills one of
the following possibilities:
\begin{enumerate}
\item
$E \cap B_i = \{p_1,p_2\}$ or
\item
$E \cap B_i = \{p\}$ and $ (E, B_i)_p =2$ or
\item
$E \cap B_i = \{p\}$ and $ (E, B_i)_p =1$.
\end{enumerate}
\begin{proof}[Proof of Proposition \ref{n=0}]
In cases 1.) and 2.), by Proposition \ref{at most two} the only branch curve which is met by 
$E$ is $B_i$ itself. The
blowing down of $E$ transforms $B_i$ into a singular curve of
self-intersection zero. Since a del Pezzo surface does not admit a
curve of this type, there exists a Mori-fiber $E_1 \in \mathcal E$ with 
$E_1 \cap E \neq \emptyset$. By Remark \ref{self-int of Mori-fibers}
the Mori-fiber $E_1$ does not meet the branch locus $B$ and $E_1^2=-2$.
Furthermore, $E_1$ meets no $(-1)$-curve among the
Mori-fibers except $E$. Thus we have found 14 Mori-fibers along the orbit
$H.B_i$. If $n=7$, this yields the desired contradiction.
If $n=14$, then none of these 14 Mori-fibers meets a
branch curve in the other $H$-orbit, which must therefore
contribute at least 7 additional Mori-fibers. The total number of
Mori-fibers in this configuration violates Lemma \ref{estimate}.

It remains to consider the situation where all intersections of $B_i
\in \mathcal R$ with Mori-fibers are as in case 3.).
As above, we deduce the
existence of a Mori-fiber $E_1$ which meets $E$ in exactly one
point. If $n=7$, we have reached the desired contradiction. Hence, we
may suppose that $n=14$. 

Let us check that without loss of generality we can assume that $E_1 \in
\mathcal E$, i.e., $E_1 \not\in \mathcal R$. 
If $E_1 \in \mathcal R$, i.e., $E_1 = B_k$ for some $B_k \in \mathcal R \backslash (H.B_i)$, then blowing down $E$ transforms $B_i \cup B_k$ into a pair of intersecting $(-3)$-curves. It follows that there exists a Mori-fiber $E_2 \in \mathcal E$ with $E_2 \cap E \neq \emptyset$ which we then pick instead of $E_1$.

So let $E_1 \in \mathcal E = \mathcal E _\mathrm{Mori} \backslash \mathcal R$.
In particular, $E_1$ has
self-intersection $-2$, meets no branch curve, no $(-1)$-curve
among the Mori-fibers except $E$, and meets $E$ in exactly one point. It follows that $E_1$ must be
$I_{B_i}$-invariant, since otherwise we find at least 28
Mori-fibers along $H.B_i$. 

If $E$ meets a branch curve $B_k$ 
in the other $H$-orbit, then $B_k$ must also be 
$I_{B_i}$-invariant, as otherwise $E$ would meet at least
four branch curves. We obtain a contradiction since the
$I_{B_i}$-action on $E$ may not fix the three points of intersection
of $E$ with $B_i$, $B_k$ and $E_1$.  Consequently, $E$ cannot meet
a branch curve in the other $H$-orbit, which therefore contributes at least seven Mori-fibers, and $|\mathcal E_{\text{Mori}}|
\geq 21$.
\end{proof}
If $X/\sigma =Y$ is not $H$-minimal, then it is characterized by the
following observation.
\begin {proposition}
The surface $Y$ is either $H$-minimal or the blow up of $\mathbb P_2$ in seven singularities of an irreducible $H$-invariant sextic..
\end {proposition}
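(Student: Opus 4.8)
The plan is to exploit the dichotomy of the equivariant minimal model program: if $Y$ is already $H$-minimal there is nothing to prove, so I may assume the Mori-reduction $Y \to Y_{\mathrm{min}}$ is nontrivial and identify $Y$ explicitly. Throughout I use Proposition \ref{n=0}, so that $B = B_0$ is the unique branch curve. Since $\mathrm{Fix}_X(\sigma)$ has no rational components (Proposition \ref{AN-description} with $n=0$), it is a single smooth curve $D_g$ mapping isomorphically onto $B_0$; thus $B_0$ is smooth, connected of genus $g \geq 3$, and irreducible. The identity \eqref{Euler characteristic formula} with $n=0$ reads $e(Y) = e(Y_{\mathrm{min}}) + |\mathcal{E}_{\text{Mori}}| = 13 - g$. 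As $Y_{\mathrm{min}}$ is a del Pezzo surface, $e(Y_{\mathrm{min}}) \geq 3$, whence $|\mathcal{E}_{\text{Mori}}| \leq 13 - g - 3 \leq 7$.

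The heart of the argument is to determine the sizes of the $H$-orbits of $(-1)$-curves contracted by the reduction. Each such orbit has cardinality dividing $|H| = 21$ and at most $|\mathcal{E}_{\text{Mori}}| \leq 7$ elements, so only the sizes $1$, $3$, $7$ are a priori possible. I would rule out the first two using the commutator subgroup $H' \cong C_7$. If $H$ stabilized a single $(-1)$-curve $E \cong \mathbb{P}_1$, then, since $H$ has no effective action on $\mathbb{P}_1$, its kernel on $E$ is a nontrivial normal subgroup and hence contains $H'$; this forces $E \subset \mathrm{Fix}_Y(H')$, contradicting $|\mathrm{Fix}_Y(H')| = 3$. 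If an orbit had size $3$, the stabilizer of each curve would be the unique order-$7$ subgroup $H'$, which then acts on each of the three disjoint curves with two fixed points, producing at least six $H'$-fixed points and again contradicting $|\mathrm{Fix}_Y(H')| = 3$. Hence every contracted orbit consists of exactly seven curves, and since $|\mathcal{E}_{\text{Mori}}| \leq 7$ there is a single contraction of seven disjoint $(-1)$-curves. Therefore $|\mathcal{E}_{\text{Mori}}| = 7$, $e(Y_{\mathrm{min}}) = 6 - g$, and comparison with $e(Y_{\mathrm{min}}) \geq 3$ forces $g = 3$ and $Y_{\mathrm{min}} = \mathbb{P}_2$.

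It then remains to recognize $Y$ as the blow up of the asserted sextic. Blowing down the seven exceptional curves $E_1, \dots, E_7$ identifies $Y$ with the blow up of $\mathbb{P}_2$ at their images $p_1, \dots, p_7$, which form an $H$-orbit because the contraction is $H$-equivariant. Since $\pi\colon X \to Y$ is the double cover branched along $B_0$ and $X$ is a K3-surface, $0 = K_X = \pi^*\bigl(K_Y + \tfrac{1}{2}B_0\bigr)$, so $B_0 \in |-2K_Y|$. Writing $K_Y = -3\ell + \sum_{i=1}^7 e_i$ gives $B_0 = 6\ell - 2\sum_i e_i$, so its image $\bar{B}_0 \subset \mathbb{P}_2$ is a sextic with a double point at each $p_i$, and it is $H$-invariant and irreducible because $B_0$ is. Finally, since a smooth plane sextic has arithmetic genus $10$ and the geometric genus of $\bar{B}_0$ equals $g(B_0)=3$, the seven double points already account for the full drop $10 - 3 = 7$; hence each $p_i$ is an ordinary node, these are the only singularities, and $Y$ is the blow up of $\mathbb{P}_2$ in the seven singularities of the irreducible $H$-invariant sextic $\bar{B}_0$.

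I expect the orbit-size analysis to be the main obstacle, since it is precisely where the special structure of $H = C_3 \ltimes C_7$ and the fixed-point count of $H'$ do the essential work. Once only orbits of size seven survive, the numerical identity \eqref{Euler characteristic formula} and the double-cover relation $B_0 \in |-2K_Y|$ pin down the geometry almost automatically.
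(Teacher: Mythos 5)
Your proof is correct and follows the same skeleton as the paper's: the Euler-characteristic bound $|\mathcal E_{\mathrm{Mori}}|\le 7$, the exclusion of contracted $H$-orbits of size $1$ and $3$ via the fixed-point count $|\mathrm{Fix}_Y(H')|=3$, and the resulting conclusion that exactly one orbit of seven disjoint $(-1)$-curves is contracted, forcing $g(B_0)=3$ and $Y_{\mathrm{min}}=\mathbb P_2$. (The paper merely asserts $|\mathcal E|\in\{0,3,6,7\}$ and treats the cases $3$ and $6$; your explicit exclusion of an $H$-fixed $(-1)$-curve fills in that tacit step.) Where you genuinely diverge is the final identification of the image of $B_0$: the paper computes $B_0^2=8$ by adjunction on $X$ and notes that the self-intersection of the image curve in $\mathbb P_2$ must be $8+7=15$ or $8+4\cdot 7=36$, ruling out $15$ because it is not a perfect square; you instead read off the class $B_0=-2K_Y=6\ell-2\sum_i e_i$ from $K_X=\pi^*\bigl(K_Y+\tfrac12 B_0\bigr)=0$, which yields the degree $6$ and the multiplicity $2$ at each $p_i$ in one stroke and is arguably cleaner. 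One small overstatement: the genus count $10-3=7$ only shows that each singular point has $\delta$-invariant $1$ and multiplicity $2$, which does not by itself exclude cusps (Proposition \ref{at most two} permits $E$ tangent to $B_0$), so ``ordinary node'' is not justified at this stage --- but since the proposition only asserts ``singularities,'' nothing is lost.
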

\begin {proof}
Since $n=0$, the Euler characteristic formula 
(\ref {Euler characteristic formula}) yields 
$\vert \mathcal E\vert \le 7$.  The fact that
$H$ acts on $\mathcal E$ then implies that 
$\vert \mathcal E\vert \in \{ 0,3,6, 7\}$.  If $\vert \mathcal E\vert \in \{3, 6\}$,
then $H'$ stabilizes every $E\in \mathcal E$, and
consequently it has more then three fixed points, a contradiction. Thus we must only 
consider the case $\vert \mathcal E\vert =7$.

Since $\mathcal E$ is an $H$-orbit, it follows that every
$E\in \mathcal E$ has self-intersection $-1$ and therefore has
nonempty intersection with $B_0$ by Remark \ref{self-int of Mori-fibers}.

The Euler characteristic formula
again implies that $g(B_0)=3$ and $Y_\mathrm{min}=\mathbb P_2$ and 
adjunction in $X$ shows that $B_0.B_0=8$ in $Y$. The fact that
$B_0$ has nonempty intersection with seven different Mori-fibers implies
that its image $C$ in $Y_\mathrm{min}$ has self-intersection either $15 = 8 +7$ or $36 = 8 + 4 \cdot 7$. Since the first is impossible it follows that $(E,B_0)=2$ for all $E \in \mathcal E$ and the $H$-invariant irreducible sextic $C$ has seven singular points corresponding to the images of $E$ in $\mathbb P_2$.
\end {proof}
\begin{corollary}\label{rough classi}
If $Y$ is not $H$-minimal, then $X$ is the minimal desingularization of a double cover of $\mathbb{P}_2$ branched along an irreducible $H$-invariant sextic with seven singular points.
\end{corollary}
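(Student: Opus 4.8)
The plan is to let the preceding proposition supply the plane model and then invoke the standard relationship between a double cover and the blow up of the singularities of its branch curve. In the non-$H$-minimal case that proposition produces a blow up $f\colon Y\to\mathbb{P}_2$ at the seven singular points $p_1,\dots,p_7$ of an irreducible $H$-invariant sextic $C$, with exceptional curves $E_1,\dots,E_7\in\mathcal E$, and identifies $B_0$ with the proper transform of $C$. Since $Y=X/\sigma$ realizes $X$ as the double cover $\pi\colon X\to Y$ branched along the smooth curve $B_0$, what remains is to identify this smooth K3-surface with the minimal desingularization of the double cover $X_{\mathrm{sing}}\to\mathbb{P}_2$ branched along $C$ itself.

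First I would pin down the local nature of the seven singular points. A smooth plane sextic has genus $10$, while the normalization $B_0$ of $C$ has genus $g(B_0)=3$, so the total $\delta$-invariant of $C$ equals $7$; as there are exactly seven singular points, each has $\delta$-invariant one and is therefore a node or a cusp. Consequently the double cover $X_{\mathrm{sing}}\to\mathbb{P}_2$ acquires a rational double point (of type $A_1$ or $A_2$) over each $p_i$ and is smooth elsewhere; in particular its minimal resolution is again a K3-surface.

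Next I would construct the comparison morphism. Writing $f^*C=B_0+2\sum_iE_i$, the line bundle $L$ with $L^{\otimes2}=\mathcal O_Y(B_0)$ that defines $\pi$ satisfies $L\cong f^*\mathcal O_{\mathbb{P}_2}(3)\otimes\mathcal O_Y(-\sum_iE_i)$, so it differs from the pullback $f^*\mathcal O_{\mathbb{P}_2}(3)$ of the line bundle defining $X_{\mathrm{sing}}$ only by the twist $\mathcal O_Y(-\sum_iE_i)$. This twist is precisely the one appearing in the canonical resolution of the singular double cover $X_{\mathrm{sing}}$ (blow up the base, remove the even part of the branch divisor, take the double cover), and it yields a birational morphism $X\to X_{\mathrm{sing}}$ that is an isomorphism over the smooth locus of $X_{\mathrm{sing}}$ and contracts the curves lying over the $p_i$. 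These curves are supported on the $\pi^{-1}(E_i)$; being rational, they have self-intersection $-2$ by adjunction on the K3-surface $X$ and form the resolution configuration of the rational double point over each $p_i$. Since $X$ is a smooth K3-surface it contains no $(-1)$-curves, so the induced morphism from $X$ to the minimal resolution $\tilde X_{\mathrm{sing}}$ is an isomorphism, giving $X\cong\tilde X_{\mathrm{sing}}$.

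The step I expect to be the main obstacle is the middle one: checking that $\pi$ really is the canonical resolution of $X_{\mathrm{sing}}$ rather than merely a birational model agreeing with it on a dense open set. I would settle this by a local computation over a single $p_i$, where in suitable coordinates $C$ is a double point $\{g(x,y)=0\}$ and $X_{\mathrm{sing}}$ is $\{w^2=g\}$; blowing up $p_i$, subtracting the resulting even divisor $2E_i$ from the branch locus, and taking the double cover reproduces precisely the one-step resolution of the rational double point, with exceptional locus $\pi^{-1}(E_i)$. Matching this local model to the global double cover $\pi$ provides the morphism $X\to X_{\mathrm{sing}}$ and completes the identification.
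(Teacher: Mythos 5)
Your proposal is correct and takes essentially the approach the paper intends: the corollary is stated without proof as an immediate consequence of the preceding proposition, whose content (that $Y$ is the blow up of $\mathbb{P}_2$ at the seven singular points of an irreducible $H$-invariant sextic, with $B_0$ its proper transform) is exactly your starting point, and your canonical-resolution argument supplies the standard identification of the branched cover of $Y$ with the minimal desingularization of the singular double cover of $\mathbb{P}_2$. Your supporting details --- the $\delta$-invariant count forcing $A_1$ or $A_2$ points, the $(-2)$-self-intersection of the components of $\pi^{-1}(E_i)$, and the absence of $(-1)$-curves on a K3-surface --- are all consistent with the paper's Remark \ref{self-int of Mori-fibers} and Proposition \ref{at most two}.
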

We conclude this subsection with a classification of possible
$H$-minimal models of $Y$.
\begin {proposition}
The surface $Y_\mathrm{min}$ is either a del Pezzo surface of degree two
or $\mathbb P_2$.
\end {proposition}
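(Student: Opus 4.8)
The plan is to split according to whether $Y$ is $H$-minimal, dispatch the non-minimal case by the preceding proposition, and then read off the degree in the minimal case from the action of the order-seven subgroup $H'$ via the Lefschetz fixed point formula. First I would invoke the preceding proposition: if $Y$ is not $H$-minimal, then $Y_\mathrm{min}=\mathbb P_2$ and there is nothing further to prove. So I may assume $Y=Y_\mathrm{min}$ is $H$-minimal; by the lemma excluding conic bundles it is then a del Pezzo surface, which I write as the blow up of $\mathbb P_2$ in $9-d$ points with $K_{Y_\mathrm{min}}^2=d$, the case $\mathbb P_1\times\mathbb P_1$ ($d=8$) being covered by the same Betti-number bookkeeping below. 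The goal is to show $d\in\{2,9\}$.

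Let $h$ generate $H'\cong C_7$. We have already established that $\mathrm{Fix}_Y(H')=\{y_1,y_2,y_3\}$ consists of three points, so the fixed locus of $h$ on $Y_\mathrm{min}$ is finite. Since $h$ is a holomorphic automorphism of finite order, each fixed point is nondegenerate and isolated and contributes $+1$ to the topological Lefschetz number (the local index of a fixed point of a holomorphic map being $\mathrm{sign}\,\det_{\mathbb R}(I-dh)=+1$). Hence $L(h)=3$. On the other hand $Y_\mathrm{min}$ is a rational surface with $H^1=H^3=0$ and $H^0\cong H^4\cong\mathbb Q$, so $L(h)=2+\mathrm{tr}\bigl(h^*\mid H^2(Y_\mathrm{min},\mathbb Q)\bigr)$, giving $\mathrm{tr}(h^*\mid H^2)=1$. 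Writing $H^2\cong \mathbb Q\cdot K\oplus K^\perp$, where $K=K_{Y_\mathrm{min}}$ is fixed by $h^*$ and $K^\perp$ has rank $b_2-1=9-d$, I conclude $\mathrm{tr}(h^*\mid K^\perp)=0$.

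Finally I would feed this into the rational representation theory of $C_7$ acting on the lattice $K^\perp$. As $h^*$ has order dividing $7$, the representation $K^\perp\otimes\mathbb Q$ decomposes into $a$ copies of the trivial representation and $b$ copies of the six-dimensional irreducible representation afforded by $\mathbb Q(\zeta_7)$, on whose generator the trace equals $-1$ (the sum of the primitive seventh roots of unity). Thus $a+6b=9-d$ and $a-b=0$, forcing $a=b$ and $9-d=7b$. Since $1\le d\le 9$ gives $0\le 9-d\le 8$, the only possibilities are $b=0$, whence $d=9$ and $Y_\mathrm{min}=\mathbb P_2$, and $b=1$, whence $d=2$. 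This is exactly the assertion, and in particular it rules out degrees $3,\dots,8$.

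I expect the only delicate points to be verifying that the three points $y_i$ are genuinely isolated fixed points so that $L(h)=3$, and carrying out the bookkeeping that converts $\mathrm{tr}(h^*\mid K^\perp)=0$ into the divisibility constraint $7\mid(9-d)$. Both are routine given what is already in hand: the finiteness of $\mathrm{Fix}_Y(H')$ established earlier, and the standard fact that the only $\mathbb Q$-irreducible representations of $C_7$ are the trivial one and the six-dimensional one with character value $-1$ on a generator.
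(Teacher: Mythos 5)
Your proof is correct, but it takes a genuinely different route from the paper. The paper argues case by case on the degree $d$ of the del Pezzo surface: it excludes $\mathbb P_1\times\mathbb P_1$ because $H$ has no subgroup of index two and does not embed in $\mathrm{PSL}_2(\mathbb C)$; it excludes $d=1$ because the base point of the anticanonical system would be an $H$-fixed point, forcing a faithful $2$-dimensional representation of $H$; and it excludes $d=3,\dots,8$ by counting the $(-1)$-curves ($27,16,10,6,2,1$ respectively) and observing that since $H$-orbits have length $1$, $3$, $7$ or $21$, an orbit of length $1$ or $3$ must occur --- the first again yields a $2$-dimensional representation after blowing down, and the second forces $H'$ to have at least six fixed points, contradicting $|\mathrm{Fix}_Y(H')|=3$. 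Your argument replaces all of this with a single computation: the topological Lefschetz number of a generator $h$ of $H'$ equals $3$ (three isolated fixed points of local index $+1$), so $\mathrm{tr}(h^*\mid H^2)=1$, and the rational representation theory of $C_7$ (only the trivial representation and the $6$-dimensional one with trace $-1$) turns this into the divisibility constraint $7\mid(9-d)$, giving $d\in\{2,9\}$ in one stroke and disposing of $d=1$ and $\mathbb P_1\times\mathbb P_1$ along the way. Your reduction to the $H$-minimal case via the preceding proposition is legitimate, and the two delicate points you flag (isolatedness/nondegeneracy of the fixed points, and the orthogonal splitting $H^2\otimes\mathbb Q=\mathbb Q K\oplus K^\perp$, which uses $K^2=d\neq 0$) are indeed the only ones and are handled correctly. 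What your approach buys is uniformity and brevity; what the paper's buys is that it stays entirely within the elementary combinatorics of $(-1)$-curves and the group structure of $H$, without invoking the Lefschetz fixed point formula, and its orbit-counting technique is the one reused elsewhere in the argument.
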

\begin {proof}
If $Y_\mathrm{min}=\mathbb P_1\times \mathbb P_1$, then, since there are
no nontrivial homomorphisms of $H$ to $C_2$, it
follows that $H$ is contained in the connected component
of $\mathrm {Aut}(Y_\mathrm{min})$.  But this is also not possible, because
there are no injective homomorphisms of $H$ into 
$\mathrm {PSL}_2(\mathbb C)$. Thus $Y_\mathrm{min}=Y_d$ 
is a del Pezzo surface of degree $d=1,\ldots ,9$ which is a blowup
of $\mathbb P_2$ in $9-d$ points.

It is immediate that $d\not=1$, because the anticanonical map of
such a surface has exactly one base point.  
Since this would have to be $H$-fixed and $H$ has no faithful 
2-dimensional representations, this case does not occur and
we must only eliminate $d=3,\ldots ,8$.  However, in these
cases the sets $\mathcal K$ of (-1)-curves consist, respectively,
of $27,16,10,6,2,1$ elements (see, e.g., \cite {Ma}).  
The $H$-orbits in $\mathcal K$
consist of either $1$, $3$, $7$ or $21$ curves and clearly 
either $1$ or $3$ must occur in every case.  
If $H$ had a fixed curve
in $\mathcal K$, then we could blow it down to obtain a 2-dimensional
representation of $H$ which does not exist. If $H$ had an
orbit consisting of three curves, $H'$ would stabilize each of the 
curves in that orbit. Thus $H'$ would have at least six fixed points in
$Y_\mathrm{min}$ and in $Y$. This contradicts the fact that $| \mathrm{Fix}_Y(H')|=3$. 
\end {proof}
\subsection {Computation of invariants and equivalence}\label{computation}

The results of the previous section reduce the problem
of parameterizing the equivalence classes 
of K3-surfaces equipped with actions $\alpha \in \mathcal A$
to studying equivariant 2:1-covers
of surfaces $Y=X/\sigma $ branched along a single curve of genus $\geq 3$. The surface $Y$ is either $\mathbb P_2$, the blow up of $\mathbb P_2$ in seven singularities of an irreducible 
$H$-invariant sextic, or a del Pezzo
surfaces of degree two. Let us begin with a study of the first two cases where $Y_\mathrm{min} = \mathbb P_2$
\subsection*{The case where $Y_\mathrm{min} = \mathbb P_2$}
Here we will show that if $Y_\mathrm{min} = \mathbb P_2$, then 
$X$ is either a double cover of $\mathbb P_2$ branched along a 
smooth $H$-invariant sextic or the minimal desingularization of a 
double cover of $\mathbb{P}_2$ branched along a unique sextic with
seven singular points which is described below.

An effective action of $H$ on $\mathbb P_2$ is given by an injective homomorphisms $H \to \mathrm {PGL}_3(\mathbb C)$.
Due to its group structure, the only central extension of $H$ by $C_3$ is trivial.
Thus, we may regard $H$ as a subgroup of $\mathrm {SL}_3(\mathbb C)$
acting by one of its two irreducible 3-dimensional representations.
Since these representations differ by a group automorphism
and the corresponding actions on $\mathbb{P}_2$ are therefore equivalent, 
we may assume that we are only dealing with the following case:
in appropriately chosen coordinates a generator of $H'$ acts by
$[z_0:z_1:z_2]\mapsto [\lambda z_0,\lambda ^2,z_1,\lambda ^4z_2]$, 
where $\lambda =\mathrm{exp}{\frac{2\pi i}{7}}$ and a generator of
$C_3$ acts by the cyclic permutation 
$\tau $ which is defined by $[z_0:z_1:z_2]\mapsto [z_2:z_0:z_1]$. 

Any homogeneous polynomial defining an invariant curve 
must be an $H$-ei\-gen\-function with $H'$ acting with eigenvalue one.
It is a simple matter to compute the $H'$-invariant monomials:
$$ 
\mathbb C[z_0,z_1,z_2]_{(6)}^{H'}=
\mathrm {Span}\{z_0^2z_1^2z_2^2, z_0^5z_1,z_2^5z_0,z_1^5z_2\}\,.
$$
Letting $P_1=z_0^2z_1^2z_2^2 $ and $P_2=z_0^5z_1+z_2^5z_0+z_1^5z_2$,
it follows that 
$$
\mathbb C[z_0,z_1,z_2]_{(6)}^{H}=
\mathrm {Span}\{P_1,P_2\}=:V\,.
$$
There are two 1-dimensional $H$-eigenspaces, i.e., those spanned by
$z_0^5z_1+\zeta z_2^5z_0+\zeta ^2z_1^5z_2$ for $\zeta ^3=1$
but $\zeta \not =1$. By direct computation one checks that
the curves defined by these polynomials are smooth and that in both
cases all $\tau $
fixed points in $\mathbb P_2$ lie on them.
Thus, $\tau $ has only three fixed points on the K3-surface
$X$ obtained as a double cover.  But $\tau $ generates a copy of 
$C_3$ which, if it would
act by symplectic transformations,  
would have six fixed points in $X$ (\cite {N}). Consequently, 
$H$ does not lift to an action by symplectic transformations
on the K3-surfaces defined
by these curves. Hence it is enough to consider  
ramified covers $X\to Y=\mathbb P_2$, where the polynomials
defining the branch curves are invariant.

Therefore the curves under discussion are parameterized by
the 1-dimensional projective space $\mathbb P(V)$.
Our first step is to determine which polynomials 
$P_{\alpha,\beta}=\alpha P_1+\beta P_2$ define singular curves
$C=\{P=0\}$.  Clearly $P_1$ is such an example.
The location of the $\tau $-fixed points is key 
for the determination of the other singular curves.
Since $\mathrm {Fix}(\tau)$ consists of the three points 
$[1:\zeta :\zeta ^2]$, where $\zeta ^3 =1$,  the
curves which contain $\tau $-fixed points are
defined by condition $\alpha +3\zeta \beta =0$.
Let $C_{P_1}$ be the curve defined by $P_1$ and $C_\zeta $ be
that defined by $P_{\alpha,\beta}$ which satisfies
the above condition.  A direct computation shows that
the $C_\zeta $ are singular at the corresponding
$\tau $-fixed point.  So we let $\Sigma_\mathrm{reg} $ be
the complement of this set of four curves in $\mathbb P(V)$
and note the following.
\begin {proposition}
A curve $C\in \mathbb P(V)$ is smooth if and only if it is in $\Sigma_\mathrm{reg} $.
Furthermore, for every such $C$ the $H$-action on $\mathbb P_2$
lifts to a group of symplectic transformations on the K3-surface
$X$ which is defined as the 2:1 cover of $\mathbb P_2$ 
ramified over $C$.  The $H$-action is centralized by the antisymplectic involution which defines the covering
$X\to \mathbb P_2$.
\end {proposition}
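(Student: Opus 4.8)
The statement bundles together three assertions: (a) the four curves $C_{P_1}$ and $C_\zeta$ ($\zeta^3=1$) are precisely the singular members of the pencil $\mathbb P(V)$; (b) for a smooth member $C$ the $H$-action on $\mathbb P_2$ lifts to the double cover $X$; and (c) the lift is symplectic while the deck transformation $\sigma$ is antisymplectic. The plan is to treat (a) as the analytic core and to dispatch (b)--(c) by a short representation-theoretic argument once $X$ is written down explicitly.

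For (a), the governing principle is that for $P=P_{\alpha,\beta}=\alpha P_1+\beta P_2$ the singular locus $\mathrm{Sing}(C)$ is $H$-invariant, hence a union of $H$-orbits. Since $H$ acts irreducibly on $\mathbb C^3$ it has no fixed point on $\mathbb P_2$, and because every nontrivial element of $H=C_3\ltimes C_7$ has order $3$ or $7$, the only points with nontrivial stabilizer are the three coordinate points (the $H'$-fixed locus, one orbit of size $3$) and the three orbits $O_1,O_\zeta,O_{\zeta^2}$ of size $7$ through the $\tau$-fixed points $[1:\zeta:\zeta^2]$; all other orbits have size $21$. A reduced plane sextic has arithmetic genus $10$ and therefore at most $10$ singular points, so no orbit of size $14$ or $21$ can be singular. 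For a reduced member the singular locus is thus confined to $\{[1:0:0],[0:1:0],[0:0:1]\}\cup O_1\cup O_\zeta\cup O_{\zeta^2}$, and it suffices to test the gradient $\nabla P$ at one representative of each orbit. At a coordinate point this forces $\beta=0$, i.e. $C=C_{P_1}$; at $p=[1:\zeta:\zeta^2]$ a direct computation gives all three partials proportional to $\alpha+3\zeta\beta$, so $p$ (and hence its whole orbit) is singular exactly when $\alpha+3\zeta\beta=0$, which is the curve $C_\zeta$. It remains only to check that no member other than $C_{P_1}$ is non-reduced: a repeated component would be an $H$-invariant curve of degree $\le 3$, but irreducibility of the representation and of its dual rules out invariant lines and conics, while the single orbit of three lines forces $P=cP_1$. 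Assembling these facts identifies the singular members with the complement of $\Sigma_{\mathrm{reg}}$.

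For (b) and (c) I would realize $X$ as the hypersurface $\{w^2=F(z_0,z_1,z_2)\}$ in $\mathbb P(1,1,1,3)$, where $F\in V$ defines $C$ and $\deg w=3$; smoothness of $C$ guarantees that $X$ is a smooth K3-surface with $\sigma$ the involution $w\mapsto -w$. Writing $H\subset\mathrm{SL}_3(\mathbb C)$ as in the preceding discussion, the map $([z],w)\mapsto([gz],w)$ preserves the equation because $F$ is $H$-invariant, so it defines a lift of $g$ to $\mathrm{Aut}(X)$ commuting with $\sigma$, which settles (b). For (c) I would exhibit the holomorphic two-form as $\omega=\eta/w$ with $\eta=\iota_E(dz_0\wedge dz_1\wedge dz_2)$ and $E$ the Euler field. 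Since $g^*\eta=\det(g)\,\eta$ and the chosen lift fixes $w$, one obtains $g^*\omega=\det(g)\,\omega=\omega$ for $g\in\mathrm{SL}_3(\mathbb C)$, so $H$ is symplectic, whereas $\sigma^*\omega=-\omega$ shows $\sigma$ is antisymplectic, exactly the configuration of $\S\ref{notation}$.

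The main obstacle is the converse direction of (a): proving that every member outside the four special curves is genuinely smooth. The gradient computation only detects singularities at points one already suspects; what makes the argument work is the structural input confining $\mathrm{Sing}(C)$ to finitely many special orbits, namely the orbit classification of $H$ on $\mathbb P_2$ together with the genus bound on the number of singular points of a reduced sextic. Once this reduction is in place the remaining verifications are routine. One could instead compute the discriminant of the pencil as a binary form in $[\alpha:\beta]$ and count its four roots, but the equivariant argument avoids the degree bookkeeping.
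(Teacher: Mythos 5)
Your argument is correct in substance and reaches the same conclusions, but it is organized differently from the paper's proof, so a comparison is worthwhile. For the smoothness claim the paper first restricts to $C\in\Sigma_{\mathrm{reg}}$, so that $C$ contains no $\tau$-fixed point and hence no point whose stabilizer has order $3$; every $H$-orbit on $C$ then has $3$ or $21$ elements, and after checking smoothness at the $H'$-fixed coordinate points the authors prove irreducibility of $C$ and use the genus formula (an irreducible sextic has at most $10$ singular points) to exclude the size-$21$ orbits. You instead treat the whole pencil at once, confining $\mathrm{Sing}(C)$ to the $24$ points with nontrivial stabilizer and evaluating $\nabla P$ at a representative of each special orbit (your computations at $[1:0:0]$ and $[1:\zeta:\zeta^2]$ are correct); this has the advantage of proving both directions of the equivalence in one pass, whereas the paper verifies separately, before the proposition, that the $C_\zeta$ are singular. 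One inaccuracy to fix: a \emph{reduced} plane sextic can have up to $15$ singular points (six general lines have $15$ nodes); the bound $10$ requires irreducibility, which is exactly why the paper proves irreducibility first. Since $15<21$ your exclusion of size-$21$ orbits survives (and ``orbit of size $14$'' is vacuous, as orbit lengths divide $21$), so the argument is not damaged, but the claim should be corrected to the bound $p_a+r-1\le 15$ or preceded by the irreducibility argument. For the lift and the symplectic/antisymplectic dichotomy the paper argues abstractly: the central extension of $H$ by $C_2$ splits because $|H|$ is odd, $H'$ is a commutator group and hence automatically symplectic, and $\tau$ is symplectic because its linearization at a fixed point off $C$ lies in $\mathrm{SL}_2(\mathbb C)$. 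Your weighted-hypersurface model $\{w^2=F\}\subset\mathbb P(1,1,1,3)$ with the explicit $2$-form $\omega=\iota_E(dz_0\wedge dz_1\wedge dz_2)/w$ is more explicit, treats all of $H$ uniformly using $\det g=1$, and yields $\sigma^*\omega=-\omega$ immediately; both routes are valid, yours trading the paper's soft arguments for a concrete model.
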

\begin {proof}
First we show that
every curve $C\in \Sigma _\mathrm{reg}$ is smooth.  For this observe that,
since $\tau $ has no fixed points in $C$ and every subgroup of
order three in $H$ is conjugate to that generated by $\tau $,
$\vert H.p\vert =3,21$ for every $p\in C$.  Now the only
subgroup of order seven in $H$ is the commutator group
$H'$.  So the only possible $H$-orbits having three elements
are the orbits of the $H'$-fixed points. So we pick one such fixed
point $p$ and directly check that every $C\in \Sigma_\mathrm{reg} $ is smooth
at $p$.  Thus if $C$ is singular at some point $q$, then it
is singular at each of the 21 points in $H.q$.  By considering
the $H$-action on the space of irreducible components of $C$, one
checks that $C$ is irreducible, and therefore 
the genus formula can be applied to show that $C$ has at most 10 singularities. Hence $C$ is smooth 

Now the preimage of $H$ in $\mathrm {Aut}(X)$ is a central extension
of $H$ by $C_2$.  This splits as $H\times C_2$, where
the second factor is generated by the 2:1 covering map $\sigma $ which
acting antisymplectically. 
Since the commutator group $H'$ 
automatically acts by symplectic transformations, we must
only check that the lift of the cyclic permutation $\tau $,
$[z_0:z_1:z_2]\mapsto [z_2:z_0:z_1]$, acts symplectically.
The local
linearization of $\tau $ at the fixed point $[1:1:1] \in \mathbb P_2$ is in $\mathrm {SL}_2(\mathbb C)$.
Since this fixed point is in the complement of 
$C$, its local linearization at a corresponding fixed point in $X$ is also in $\mathrm {SL}_2(\mathbb C)$ and consequently
it is acting symplectically.
\end {proof}
Let us now turn to a description of
$\mathcal M_{\mathbb P_2}:=\mathcal A_{\mathbb P_2}/\hspace{-1.5mm}\sim $.  
Here the index $\mathbb P_2$ indicates that we have restricted
to the case where $Y=Y_{\mathrm {min}}=\mathbb P_2$.
Using the covering
$X\to \mathbb P_2$ the set $\mathcal A_{\mathbb P_2}$ becomes the
set of curves $\{h(C)\, | \,  C\in \Sigma_\mathrm{reg} , \, h\in \mathrm {PGL}_3(\mathbb C)\}$.
Equivalence is also defined by the action of $\mathrm {PGL}_3(\mathbb C)$ so that $\mathcal M_{\mathbb P_2}$
can be identified with $\Sigma_\mathrm{reg} /\hspace{-1.5mm}\sim $, where $C,\tilde C\in \Sigma_\mathrm{reg}$
are equivalent if and only if there exists $h\in \mathrm {PGL}_3(\mathbb C)$
with $h(C)=\tilde C$. Of course the normalizer $N:=N(H)$ in
$\mathrm {PGL}_3(\mathbb C)$ stabilizes $\mathbb P(V)$, because
$\mathbb P(V)$ can be viewed as the set of $H$-fixed points in 
$\mathbb P(\mathbb C[z_0,z_1,z_2]_{(6)})$.

The group $N$ is the product
$\Gamma \times H$, where $\Gamma $ is the cyclic
group of order three generated by the transformation
$[z_0:z_1:z_2]\mapsto [z_0,\zeta z_1, \zeta ^2 z_2]$,
where as above $\zeta ^3=1$.  Note that $\Gamma $
acts transitively on the set of three singular curves
$\{C_\zeta\}$ and stabilizes the curve $C_{P_1}$.
It also stabilizes the curve $C_{P_2}$ defined by the
polynomial $P_2$.  
\begin {proposition}
The equivalence relation on $\Sigma_\mathrm{reg} $ is that defined by
the $\Gamma$-action, i.e.,
$$
\mathcal M_{\mathbb P_2}=\Sigma_\mathrm{reg} /\Gamma \,.
$$
\end  {proposition}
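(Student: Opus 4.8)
The plan is to show both containments of the equivalence relation. The easy direction is that $\Gamma$-equivalence implies $\mathrm{PGL}_3(\mathbb C)$-equivalence: since $\Gamma \subset N \subset \mathrm{PGL}_3(\mathbb C)$, any two curves in the same $\Gamma$-orbit are obviously related by an element of $\mathrm{PGL}_3(\mathbb C)$, hence equivalent in $\mathcal M_{\mathbb P_2}$. So the entire content lies in the converse: if $C, \tilde C \in \Sigma_\mathrm{reg}$ and $h(C) = \tilde C$ for some $h \in \mathrm{PGL}_3(\mathbb C)$, then $C$ and $\tilde C$ lie in the same $\Gamma$-orbit. I would attack this by exploiting the rigidity of the $H$-action.

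First I would observe that both $C$ and $\tilde C$ carry an effective symplectic action of $H$ (after lifting to the double covers), and that $h$ conjugates the $H$-action stabilizing $C$ to an $H$-action stabilizing $\tilde C$. The key point is that $H = C_3 \ltimes C_7$ has, up to $\mathrm{SL}_3(\mathbb C)$-conjugacy, only its two irreducible three-dimensional representations, and these two differ by the outer automorphism of $H$ (as noted in the statement-of-results discussion). Thus the $H$-action on $\mathbb P_2$ is essentially unique. The element $h$ therefore carries one copy of $H \subset \mathrm{PGL}_3(\mathbb C)$ to another copy, and after composing $h$ with an element of $H$ itself I may arrange that $h$ normalizes $H$, i.e.\ that $h \in N = N(H)$. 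Concretely: $h H h^{-1}$ is again a group acting on $\tilde C$, hence is a conjugate of $H$ by an element of $\mathrm{PGL}_3(\mathbb C)$ preserving $\tilde C$; using that any two such copies of $H$ are related by the normalizer structure, one reduces to $h \in N$. The only delicate bookkeeping here is ensuring that the outer automorphism does not produce a genuinely new equivalence — but since it is realized by a group automorphism $\psi \in \mathrm{Aut}(G)$, it is already absorbed into the equivalence relation and corresponds to the second representation, so it does not enlarge the quotient beyond the $\Gamma$-action.

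Having reduced to $h \in N = \Gamma \times H$, I would then note that the $H$-factor acts trivially on the pencil $\mathbb P(V)$ (since $\mathbb P(V)$ is precisely the fixed-point set of $H$ in $\mathbb P(\mathbb C[z_0,z_1,z_2]_{(6)})$), so only the $\Gamma$-factor can move curves within $\Sigma_\mathrm{reg}$. Therefore $h(C) = \tilde C$ with $h \in N$ forces $C$ and $\tilde C$ to lie in the same $\Gamma$-orbit, which is exactly what is to be proved. Combining the two directions gives $\mathcal M_{\mathbb P_2} = \Sigma_\mathrm{reg}/\Gamma$.

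I expect the main obstacle to be the reduction step $h \in \mathrm{PGL}_3(\mathbb C) \rightsquigarrow h \in N(H)$, that is, proving that any projective equivalence between two invariant curves can be realized inside the normalizer. This requires knowing that the two $H$-actions on $\mathbb P_2$ attached to $C$ and $\tilde C$ are conjugate in a \emph{controlled} way; the rigidity coming from the uniqueness of the three-dimensional representations of $H$ up to the outer automorphism is what makes this work, and I would state that representation-theoretic fact carefully as the crux of the argument rather than the final orbit computation, which is then immediate.
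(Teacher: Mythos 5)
There is a genuine gap at the step you yourself identify as the crux: the reduction from $h \in \mathrm{PGL}_3(\mathbb C)$ to $h \in N(H)$. The uniqueness of the three-dimensional representation of $H$ up to outer automorphism tells you only that $hHh^{-1}$ and $H$ are conjugate in $\mathrm{PGL}_3(\mathbb C)$ --- which is a tautology, since they are conjugate by $h$ itself. It does not let you modify $h$ by an element of $H$ so as to normalize $H$: for $g \in H$ one has $(gh)H(gh)^{-1} = g\left(hHh^{-1}\right)g^{-1}$, and this equals $H$ only if $hHh^{-1} = H$ already. Worse, the claimed reduction is actually false in one case: for $C = \tilde C = \mathrm{Hess}(C_{\mathrm{Klein}})$, any $h \in L_2(7) \setminus H$ satisfies $h(C) = C$ but does not normalize $H$ (a subgroup of order $21$ is maximal and self-normalizing in $L_2(7)$), so no adjustment within the stabilizer of $\tilde C$ can place such an $h$ in $N(H)$.

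The missing idea is the one the paper uses. Both $H$ (because $\tilde C \in \mathbb P(V)$) and $hHh^{-1}$ (because $C$ is $H$-invariant and $h(C)=\tilde C$) stabilize $\tilde C$, and both lift to groups of \emph{symplectic} automorphisms of the K3-surface branched over $\tilde C$; for the conjugate group this follows from the same linearization argument at the fixed points of its order-three elements that was used for $H$. Hence the span $S = \langle H, hHh^{-1}\rangle$ is a finite group of symplectic automorphisms of a K3-surface containing a copy of $C_3 \ltimes C_7$, and Mukai's classification forces $S$ to be a subgroup of $L_2(7)$, in which $H$ is maximal. So either $S = H$, whence $h \in N(H) = \Gamma \times H$ and your final orbit computation applies, or $S = L_2(7)$, whence $\tilde C$ (and by symmetry $C$) is the unique $L_2(7)$-invariant curve $\mathrm{Hess}(C_{\mathrm{Klein}})$ and $C = \tilde C$ holds trivially. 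Your easy direction and the observation that $H$ acts trivially on $\mathbb P(V)$ are fine, but without Mukai's theorem (or some substitute classification of the finite subgroups of $\mathrm{PGL}_3(\mathbb C)$ containing $H$) the argument does not close.
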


\begin {proof}
Let $C\in \Sigma_\mathrm{reg}$ and for $T\in \mathrm {SL}_3(\mathbb C)$
assume that $T(C)\in \Sigma_\mathrm{reg} $. Consider the group span
$S$ of $THT^{-1}$ and $H$. We have shown that $H$ acts as a
group of symplectic transformations on every K3-surface defined by a curve in
$\Sigma _\mathrm{reg}$. This was proved by considering the
linearization at the $\tau $-fixed points and their location. Therefore
the same argument shows that $THT^{-1}$ also acts as a group
of symplectic transformations on the K3-surface associated to $T(C)$.
Thus $S$ is acting as a group of symplectic transformations on
this K3-surface. 

Now if $S=H$, then $T$ normalizes $H$ and, modulo $H$, is contained
in $\Gamma $.  To handle the case where $S\not=H$, note 
that $L_2(7)$ is the only group in Mukai's
list which contains a copy of $H$.  Therefore if $S\not=H$, then,
since $S$ can be realized as a subgroup of $L_2(7)$ and 
$H$ is realized as a maximal subgroup, it follows that
$S=L_2(7)$.  But there is only one curve in the family
$\mathcal M_{\mathbb P_2}$ which is $L_2(7)$-invariant,
namely $\mathrm {Hess}(C_\mathrm {Klein})$.  Thus 
$C=T(C)=\mathrm {Hess}(C_\mathrm {Klein})$ and the desired result
follows.
\end {proof}
\begin{remarks}
(1) If two K3-surfaces $X_1$ and 
$X_2$ are biholomorphically equivalent by a map 
$\varphi :X_1\to X_2$ and $X_1$ comes equipped with a $G$-action
so that it is in our family $\mathcal A_{\mathbb P_2}$, then using $\varphi $
we equip $X_2$ with an equivalent $G$-action. Thus we may
regard $X_1$ and $X_2$ as equivalent points in $\Sigma_\mathrm{reg} $. As a result
we observe that no two K3-surfaces parameterized by $\mathcal M_{\mathbb P_2}$
are biholomorphically equivalent. (2) The group $\Gamma $ stabilizes the
curve $C_{P_2}$ and therefore acts together with $H$ on the associated 
K3-surface $X$. Now the only group in Mukai's list which contains a 
copy of $H$ is $L_2(7)$ and as a subgroup $H$ is maximal.  Therefore
$\Gamma $ is acting nonsymplectically on $X$.
\end{remarks}
Our study of $H$-invariant sextic curves in $\mathbb P_2$ has revealed
the existence of precisely three singular irreducible examples
$C_\zeta$ with $\zeta^3=1$. These are identified by the action of
$\Gamma$. If $Y= X/\sigma$ is not $H$-minimal, it follows that up to
equivariant equivalence the K3-surface $X$ is the minimal
desingularization of the double cover $X_\mathrm{sing}$ of $\mathbb
P_2$ branched along $C_{\zeta=1} = C_\mathrm{sing}$, i.e., the
K3-surface arising in Corollary \ref{rough
  classi} is precisely this surface. In particular,
$Y$ is the blow up of $\mathbb P_2$ in the seven singular points of
$C_\mathrm{sing}$, which are given as the $H'$-orbit of $[1:1:1] \in
\mathbb P_2$. 
\begin{proposition}
If $Y$ is not minimal, then it is the del Pezzo surface of degree two which
arises by blowing up the seven singular points $p_1,\ldots ,p_7$ on the curve
$C_\mathrm {sing}$ in $\mathbb P_2$. The corresponding map $Y \to
\mathbb P_2$ is $H$-equivariant and therefore a Mori-reduction of
$Y$. The branch curve $B_0$ of $\pi: X \to Y$ is the proper transform
of $C_\mathrm{sing}$ in $Y$.
\end{proposition}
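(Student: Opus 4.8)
The plan is to assemble the structural results already established in this subsection and then to settle the one genuinely geometric point, namely that the seven points being blown up lie in general position. First, since $Y$ is assumed not to be $H$-minimal, Proposition \ref{n=0} gives $n=0$ and $B=B_0$, while the preceding proposition shows that $|\mathcal E|=7$, that each $E\in\mathcal E$ is a $(-1)$-curve meeting $B_0$ with $(E,B_0)=2$, and that $Y_\mathrm{min}=\mathbb P_2$. Because $\mathcal E$ is a single $H$-orbit of seven disjoint $(-1)$-curves, the Mori-reduction $Y\to Y_\mathrm{min}$ is the single $H$-equivariant extremal contraction of these curves; this at once proves that $Y\to\mathbb P_2$ is an $H$-equivariant Mori-reduction and exhibits $Y$ as the blow-up of $\mathbb P_2$ in the seven image points $p_1,\dots,p_7$. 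Since the curves in $\mathcal E$ are not contained in $B$, the branch curve $B_0$ is not contracted and is therefore the proper transform of its image sextic $C$.

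Next I would identify $C$ and the points $p_i$. By the preceding proposition $C$ is an irreducible $H$-invariant sextic with seven double points, and by the classification of the members of $\mathbb P(V)$ the only singular such curves are the reducible $C_{P_1}$ and the three irreducible curves $C_\zeta$ with $\zeta^3=1$. As $C$ is irreducible it must be one of the $C_\zeta$, and since $\Gamma$ permutes the three $C_\zeta$ transitively they are mutually equivalent; hence up to equivalence $C=C_\mathrm{sing}$, whose singular locus is the $H'$-orbit of the $\tau$-fixed point $[1:1:1]$, i.e. the points $p_k=[\lambda^k:\lambda^{2k}:\lambda^{4k}]$ for $k=0,\dots,6$, with $\lambda=\exp(2\pi i/7)$. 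This yields the asserted description of $Y$ as the blow-up in $p_1,\dots,p_7$ and of $B_0$ as the proper transform of $C_\mathrm{sing}$.

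It remains to prove that this blow-up is a del Pezzo surface of degree $K_Y^2=9-7=2$; this is the only substantial step, and it amounts to checking that the seven points are in general position, i.e. that no three are collinear and no six lie on a conic. For collinearity, writing $u=\lambda^a$, $v=\lambda^b$, $w=\lambda^c$ and factoring a nonvanishing Vandermonde out of the $3\times 3$ incidence determinant reduces the condition to the Schur factor $u+v+w=0$; but three distinct seventh roots of unity can sum to zero only if they are spaced at the angle $2\pi/3$, which is incompatible with spacing $2\pi/7$, so no three $p_i$ are collinear. For the conic condition, evaluating the six quadratic monomials at $p_k$ produces the row $(\lambda^k,\lambda^{2k},\dots,\lambda^{6k})$, so any $6\times 6$ incidence matrix equals a diagonal matrix times a Vandermonde in the six distinct nonzero values $\lambda^k$ and is therefore nonsingular; hence no six of the $p_i$ lie on a conic. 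The points are thus in general position, so $-K_Y$ is ample and $Y$ is a del Pezzo surface of degree two. I expect this general-position verification to be the crux of the argument, the remaining assertions following directly from the results of the previous subsection.
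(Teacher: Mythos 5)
Your proof is correct, and its skeleton agrees with the paper's: all the structural claims (that the Mori-reduction is the single $H$-equivariant contraction of the orbit $\mathcal E$ of seven $(-1)$-curves, that the image sextic is, up to the $\Gamma$-action, $C_\mathrm{sing}$ with singular locus $H'.[1:1:1]$, and that $B_0$ is the proper transform) are read off from the preceding propositions, and the only substantive point is that the seven points are in general position. Where you genuinely differ is in how you verify this. For collinearity the paper only asserts ``direct computation''; your reduction of the $3\times 3$ determinant to the Schur factor $u+v+w$ times a nonvanishing Vandermonde, followed by the observation that three distinct seventh roots of unity cannot sum to zero, is a clean way of actually performing that computation. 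For the conic condition the paper argues geometrically: a conic $Q$ through six of the points meets each translate $h.Q$ in at least five points, hence is $H$-invariant and passes through all seven double points of $C_\mathrm{sing}$, so $(Q,C_\mathrm{sing})\ge 14>12$ forces $Q\subset C_\mathrm{sing}$, impossible for an irreducible sextic. You instead note that the quadratic monomials evaluated at $p_k=[\lambda^k:\lambda^{2k}:\lambda^{4k}]$ yield, after a column permutation, the row $(\lambda^k,\lambda^{2k},\dots,\lambda^{6k})$, so every $6\times 6$ incidence matrix is a diagonal matrix times a nonsingular Vandermonde. Both arguments are valid: the paper's B\'ezout-plus-invariance argument is coordinate-free and transfers to settings without explicit points, while yours is elementary, self-contained, and handles all choices of six of the seven points in one stroke.
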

\begin{proof}
We need to show that the points $\{p_1, \dots, p_7\} = H'.[1:1:1]$ 
are in general position, i.e., no three lie on one line and 
no six lie on one conic.
It follows from direct computation that no three points in $H'.[1:1:1]$ 
lie on one line. 
If $p_1,\dots p_6$ lie on a conic $Q$, then $h.p_1, \dots , h.p_6$ lie on 
$h.Q$ for every  $h \in H$. Since $\{p_1, \dots, p_7\}$ is an 
$H$-invariant set, the conics $Q$ and $h.Q$ intersect in at 
least five points and therefore coincide. It follows that 
$Q$ is an invariant conic meeting $C_\mathrm{sing}$ at its 
seven singularities and $(Q,C_\mathrm{sing})=14$ implies 
$Q \subset C_\mathrm{sing}$, a contradiction.
\end{proof}
\subsection*{The role of Klein's quartic} Here we show that
the del Pezzo surface which arises as the blow up of 
the seven singular points on $C_\mathrm{sing}$ can also be
regarded as the 2:1 cover of $\mathbb P_2$ ramified over
Klein's quartic curve. For this recall that 
the anticanonical map of a del Pezzo surface $Y$ of degree two realizes it as a 2:1-cover
$Y\to \mathbb P_2$ which is ramified over a smooth curve $C$ of
degree four. This map is equivariant with respect to
the full automorphism group $\mathrm {Aut}(Y)$. Conversely,
if $C$ is any smooth curve of degree four in $\mathbb P_2$,
and $Y$ is defined as the 2:1-cover of $\mathbb P_2$ which is
ramified along $C$, then $Y$ is a del Pezzo surface of degree two.
Furthermore, the elements of the stabilizer in 
$\mathrm {Aut}(\mathbb{P}_2)$ of the curve $C$ are exactly those 
transformations which lift to automorphisms of $Y$.

Now assume as in our case that $Y$ comes equipped with an $H$-action so that
$H$ can be regarded as a subgroup of $\mathrm {PGL}_3(\mathbb C)$
which stabilizes a smooth quartic curve $C \subset \mathbb P_2$. 
In order to determine the possibilities for $C$, we choose 
coordinates so that $H$ is acting as above. It follows that
$$
\mathbb C[z_0:z_1:z_2]_{(4)}^{H'}
=\mathrm {Span}\{z_0^3z_2, z_1^3z_0,z_2^3z_1\}\,.
$$ 
This is a direct sum of $H$-eigenspaces with the eigenspace
of the eigenvalue $\zeta $ being spanned by the polynomial
$Q_\zeta :=z_0^3z_2+\zeta z_2^3z_1+\zeta ^2z_1^3z_0$ with $\zeta $
arbitrary such that $\zeta ^3=1$.

Now consider the cyclic group 
$\Gamma \subset \mathrm {GL}_3(\mathbb C)$ which 
is generated by the transformation 
$\gamma$, $(z_0,z_1,z_2)\mapsto (z_0,\zeta z_1,\zeta^2z_2)$.
The group 
$\Gamma $ acts transitively on the $H$-eigenspaces
spanned by the $Q_\zeta $.  Consequently, up to equivariant
equivalence, we may assume that
$Y\to \mathbb P_2$ is ramified over Klein's curve $C_\text{Klein}$
which is defined by $Q_1$. We therefore have the following
observation.
\begin{proposition}
A del Pezzo surface of degree two with an action of $H$ is equivariantly 
isomorphic to the double cover $Y_\mathrm{Klein}$ of $\mathbb P_2$ 
branched along Klein's quartic curve with the action of 
$H$ on $\mathbb P_2$ defined as above. 
\end{proposition}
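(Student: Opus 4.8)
The plan is to build the proof on the anticanonical presentation of a degree-two del Pezzo surface together with the representation theory of $H$ already assembled in the preceding discussion. First I would invoke the fact, recalled above, that the anticanonical linear system of a del Pezzo surface $Y$ of degree two exhibits $Y$ as a double cover $Y \to \mathbb{P}_2$ branched over a smooth quartic curve $C$, and that this morphism is equivariant for all of $\mathrm{Aut}(Y)$. Since $Y$ carries an effective $H$-action, $H$ thereby descends to a subgroup of $\mathrm{PGL}_3(\mathbb{C})$ stabilizing $C$. Choosing coordinates so that $H$ acts in the standard normal form fixed earlier (with $H'$ diagonal and $\tau$ the cyclic permutation), the problem reduces to classifying the $H$-invariant smooth quartics up to the action of the normalizer $N(H)$ on $\mathbb{P}_2$.

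Next I would pin down which quartics can occur. A homogeneous quartic whose zero locus is $H$-invariant must be an $H$-semiinvariant, i.e.\ an eigenvector for the induced $H$-action on $\mathbb{C}[z_0,z_1,z_2]_{(4)}$. Every character of $H$ is trivial on the commutator subgroup $H'\cong C_7$, so the defining polynomial must lie in the $H'$-invariant subspace $\mathbb{C}[z_0,z_1,z_2]_{(4)}^{H'}=\mathrm{Span}\{z_0^3z_2,\,z_1^3z_0,\,z_2^3z_1\}$ computed above. Decomposing this three-dimensional space under the residual $\langle\tau\rangle\cong C_3$ action yields exactly three $H$-eigenlines, spanned by the polynomials $Q_\zeta = z_0^3z_2+\zeta z_2^3z_1+\zeta^2 z_1^3z_0$ with $\zeta^3=1$. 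Hence $C$ is the vanishing locus of $Q_\zeta$ for one of the three cube roots $\zeta$; smoothness is automatic, since $C$ is the branch curve of the del Pezzo surface $Y$, and in any case the three loci are mutually projectively equivalent.

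Finally I would remove the remaining ambiguity using the normalizer. The element $\gamma\in\Gamma$ given by $(z_0,z_1,z_2)\mapsto(z_0,\zeta z_1,\zeta^2 z_2)$ normalizes $H$ inside $\mathrm{PGL}_3(\mathbb{C})$ (it centralizes the diagonal $H'$ and conjugates $\tau$ to a scalar multiple of itself), so the induced isomorphism of the two double covers is equivariant in the sense of our equivalence relation. Since $\Gamma$ permutes the three eigenlines spanned by the $Q_\zeta$ transitively, applying a suitable power of $\gamma$ carries $C$ to Klein's quartic $C_{\mathrm{Klein}}=\{Q_1=0\}$, whence $Y$ is equivariantly isomorphic to $Y_{\mathrm{Klein}}$. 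The genuinely substantive content lies in the two character computations — that the invariant quartic is forced into the three-dimensional $H'$-fixed space, and that this space splits into exactly three $H$-eigenlines permuted transitively by $\Gamma$ — rather than in any deep geometry; the main point requiring care is simply to confirm that both the coordinate normalization of $H$ and the translation by $\gamma$ are legitimate equivariant equivalences, which is guaranteed by $\gamma$ normalizing $H$.
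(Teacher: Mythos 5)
Your proof is correct and follows essentially the same route as the paper: the $\mathrm{Aut}(Y)$-equivariant anticanonical double cover, the computation of $\mathbb{C}[z_0,z_1,z_2]_{(4)}^{H'}=\mathrm{Span}\{z_0^3z_2,\,z_1^3z_0,\,z_2^3z_1\}$ and its splitting into the three $H$-eigenlines spanned by the $Q_\zeta$, and the transitive action of $\Gamma$ on these eigenlines to normalize the branch curve to $C_{\mathrm{Klein}}$. The extra verifications you supply (that the character must kill the commutator subgroup $H'$, and that $\gamma$ genuinely normalizes $H$ in $\mathrm{PGL}_3(\mathbb{C})$) are correct and only make explicit what the paper leaves implicit.
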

In particular, the Mori-reduction of any $H$-action on $Y_\mathrm{Klein}$ 
is equivalent to the one defined above and in summary we have the following
result.
\begin {proposition}
If $X$ is a K3-surface equipped with an $H$-action which centralizes
an antisymplectic involution $\sigma $,  then $Y_{min}=\mathbb P_2$.
In all but one case $X/\sigma =Y=Y_{min}$. In the exceptional case
$Y=Y_\mathrm{Klein}$, the Mori-reduction $Y\to Y_{min}$ is the blow down of seven $(-1$)-curves to the singular 
points of $C_\mathrm{sing}$ and the branch set $B_0$ of $X\to Y$
is the proper transform of $C_\mathrm {sing}$ in $Y$.
\end {proposition}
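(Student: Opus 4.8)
The plan is to assemble this summary from the structural results already in hand, the one genuinely new input being the exclusion of a del Pezzo minimal model. By the classification of $H$-minimal models established just above, $Y_{\min}$ is either $\mathbb{P}_2$ or a del Pezzo surface of degree two. To eliminate the second possibility I would argue as follows: any del Pezzo surface of degree two carrying an $H$-action was shown to be equivariantly isomorphic to $Y_\mathrm{Klein}$, and the seven singular points of $C_\mathrm{sing}$ form an $H$-orbit of points in general position whose blow-up realizes $Y_\mathrm{Klein}$. The simultaneous contraction of the exceptional divisors over these points is an $H$-equivariant morphism $Y_\mathrm{Klein}\to\mathbb{P}_2$, so no such surface is $H$-minimal. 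Since $Y_{\min}$ is $H$-minimal by definition, this forces $Y_{\min}=\mathbb{P}_2$ in every case.

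It then remains to identify $Y=X/\sigma$ itself and, in the non-minimal case, to describe the reduction. For this I would invoke the dichotomy proved earlier: $Y$ is either $H$-minimal, in which case $Y=Y_{\min}=\mathbb{P}_2$, or it is the blow-up of $\mathbb{P}_2$ in the seven singularities of an irreducible $H$-invariant sextic. In the latter situation Corollary~\ref{rough classi}, together with the $\Gamma$-equivariant study of $H$-invariant sextics, identifies the sextic up to equivalence with $C_\mathrm{sing}$; hence $Y$ is the blow-up of $\mathbb{P}_2$ along the $H'$-orbit of $[1:1:1]$, which is precisely $Y_\mathrm{Klein}$.

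The description of the exceptional case is then read off from the proposition treating this blow-up directly. The seven exceptional curves lying over the singular points of $C_\mathrm{sing}$ form a single $H$-orbit of $(-1)$-curves, each meeting the branch locus by Remark~\ref{self-int of Mori-fibers}; their contraction is the $H$-equivariant Mori-reduction $Y\to Y_{\min}=\mathbb{P}_2$, and the branch curve $B_0$ of $\pi$ is the proper transform of $C_\mathrm{sing}$ in $Y$. Finally, the phrase ``in all but one case'' is justified by the observation that the three irreducible singular $H$-invariant sextics $C_\zeta$ constitute a single $\Gamma$-orbit and therefore determine a single point of $\mathcal{M}$, so exactly one surface satisfies $Y\neq Y_{\min}$.

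The main obstacle is the very first step, the exclusion of the degree-two del Pezzo minimal model: the whole argument hinges on first knowing that every such surface is $Y_\mathrm{Klein}$ and then exhibiting the $H$-equivariant contraction to $\mathbb{P}_2$ as the blow-down of the divisors over the singular locus of $C_\mathrm{sing}$. Once this equivariant contraction is in place, the remaining assertions follow by assembling the earlier propositions and counting the $\Gamma$-orbit of singular curves.
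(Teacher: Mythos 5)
Your overall assembly matches the paper's: the proposition is proved by combining the classification of $H$-minimal models, the dichotomy ``$Y$ is $H$-minimal or the blow-up of $\mathbb{P}_2$ in the seven singularities of an irreducible invariant sextic'', the general-position statement for the singular points of $C_\mathrm{sing}$, and the identification of any degree-two del Pezzo surface with $H$-action with $Y_\mathrm{Klein}$. In particular, your exclusion of a degree-two del Pezzo minimal model via the $H$-equivariant contraction of the seven exceptional curves is exactly the remark the paper makes immediately before the statement, and your count of ``one exceptional case'' via the $\Gamma$-orbit of the three irreducible singular sextics is also how the paper argues.

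The one place where you have a genuine gap is the final claim, that the branch curve $B_0$ of $X\to Y$ is the proper transform of $C_\mathrm{sing}$. You propose to read this off from the proposition treating the blow-up directly, but the proof of that proposition establishes only that the seven points are in general position; the paper explicitly opens the proof of the present proposition with ``it remains to prove that $B_0$ is the proper transform of $C_\mathrm{sing}$ in $Y$'', so within the paper's logical order your citation is circular. The paper's argument is short and worth recording: both $B_0$ and the proper transform of $C_\mathrm{sing}$ are $H$-invariant members of $|-2K_Y|$, so if they were distinct their intersection $I$ would be an $H$-invariant set of total multiplicity $(-2K_Y)^2=8$; since $H$ has no fixed points on $B_0$, the set $I$ is a union of $H$-orbits of length $3$ or $7$ with constant multiplicity along each orbit, and no such combination sums to $8$. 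Alternatively you could close the gap by tracing back to the proof of the $|\mathcal{E}|=7$ dichotomy, where the image of $B_0$ in $\mathbb{P}_2$ is shown to be precisely the irreducible singular sextic whose seven singular points are the images of the Mori-fibers, which forces $B_0$ to be its proper transform; but as written your appeal to the blow-up proposition supplies no argument at the one point where the paper supplies a new one.
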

\begin {proof}
It remains to prove that $B_0$
is the proper transform of $C_\mathrm {sing}$ in $Y$.  For this
suppose that the
branch curve of $X\to Y$ were some other curve 
$\widetilde B_0$ in the linear system of $-2K_Y$. For 
$I:=\widetilde B_0\cap B_0$ we note that $\vert I\vert \le 8$.
But since $H$ has no fixed points in $B_0$, it follows that
$\vert I\vert =3$ and that $I$ is an $H$-orbit.  Thus the
intersection multiplicities of $\widetilde B_0\cap B_0$ are
the same along $I$, contrary to the fact that 3 does not divide
8.
\end {proof}
\subsection*{Completion of the proof of Theorem \ref{main theorem}}
To complete the proof of Theorem \ref{main theorem} we
must first show that if $H$ is acting on $\mathbb P_2$ as
above, then it lifts to a group acting on the K3-surface $X$ 
which is defined as the 2:1 cover of the del Pezzo surface $Y_\text{Klein}$
which in turn is defined as the cover of $\mathbb P_2$ ramified
over $C_\text{Klein}$.  Of course we mean that $X\to Y_\text{Klein}$ is ramified over
the preimage $B_0$ of $C_\text{Klein}$. Since $H$ stabilizes $C_\text{Klein}$ and does not admit nontrivial central extensions of degree two, it lifts to a subgroup of $\mathrm{Aut}(Y_\text{Klein})$. By the same argument $H$ lifts to a subgroup of $\mathrm{Aut}(X)$.

Secondly, the covering involution  
$Y_\text{Klein}\to \mathbb P_2$,
lifts to a holomorphic transformation of $X$. On $X$ we also consider the
involution defining $X \to Y_\text{Klein}$. Together these transformations
generate a group
of order four, every element of which has a positive-dimensional
fixed point set. Therefore this group acts as $ C_4$
by a character on any choice of the symplectic form of $X$.
Thus the full preimage of $H$ in $\mathrm {Aut}(X)$ splits uniquely
as a product $H\times C_4$.  Since the commutator group $H'$ 
automatically acts by symplectic transformations, we must
only check that the lift of the cyclic permutation $\tau $,
$[z_0:z_1:z_2]\mapsto [z_2:z_0:z_1]$, acts symplectically.
As before, this follows from a linearization argument at a $\tau$-fixed point not in $C_\mathrm{Klein}$. 

Thus we observe that in this situation up to equivalence there
is a unique action of $H$ by symplectic transformations 
on a unique K3-surface $X_{KM}$, and this is centralized by a cyclic 
group of order four which acts faithfully as $C_4$
by a multiplicative character on any choice of the symplectic
form. 

It follows that $\mathcal M \backslash \mathcal M_{\mathbb P_2} =
\{X_\mathrm{KM}\}$, i.e., the Klein-Mukai-surface is the only surface in the family
$\mathcal M$ for which $Y \not\cong \mathbb P_2$. If we define $\Sigma$ as the complement of $C_{P_1}$ in $\mathbb P(V)$. Then $\Sigma = \Sigma_\mathrm{reg} \cup \{C_\zeta \, | \, \zeta^3 =1\}$ and 
\[
\mathcal M = \Sigma / \Gamma.
\]
The above discussion completes the proof of Theorem \ref{main theorem}. 
It remains to determine which K3-surfaces in the family 
$\mathcal{M}$ have $L_2(7)$-symmetry.
 \subsection{The $L_2(7)$-action}
First observe that an action of $L_2(7)$ on $\mathbb{P}_2$ is necessarily 
given by one of its two 3-dimensional irreducible representations, 
which differ by an outer automorphism of the group. We may therefore 
consider one particular representation such that the subgroup $H$ is 
represented as above and check that the curve $\mathrm{Hess}(C_\text{Klein}) \in \Sigma_\mathrm{reg}$ 
is $L_2(7)$-invariant. It is in fact the unique curve in $\Sigma_\mathrm{reg}$ with this 
property. This is a well-known result from the invariant theory of the group 
$L_2(7)$ but can also be seen as follows: suppose there were two distinct 
invariant smooth sextic curves $C_s,C_s'$. The maximal possible isotropy of 
$L_2(7)$ is the cyclic group of order 7 so that each $L_2(7)$-orbit on 
$C_s$ has at least 21 elements. It follows that there is no configuration 
of $C_s$ and $C_s'$ which fulfills $C_s.C_s'=36$, a contradiction. 

We have hereby singled out a unique K3-surface with $L_2(7)$-symmetry in 
the family $\mathcal{M}_{\mathbb P_2}$. Since $L_2(7)$ is a simple group and
in particular is equal to its commutator, its action on this 
surface is clearly symplectic and by construction is centralized by the antisymplectic 
covering involution.

To complete the proof of Theorem \ref{thmL2(7)}, note that the curve
$C_\text{Klein}\subset \mathbb{P}_2$ is $L_2(7)$-invariant with
respect to the representation discussed above. We see that $L_2(7)$ lifts to
a subgroup of $\mathrm {Aut}(X_{KM})$. 
Hence, analogous to the case of $H$, the preimage
of $L_2(7)$ in $\mathrm {Aut}(X_{KM})$ is $L_2(7)\times  C_4$,
where $L_2(7)$ is acting symplectically and $ C_4$ is
acting by holomorphic nonsymplectic transformations.  The
generator $\sigma $ of the subgroup isomorphic to $C_2$ in
$C_4$ is the antisymplectic involution which
centralizes $L_2(7)$ in the above discussion. 

In conclusion we reiterate that both of the K3-surfaces which
have an $L_2(7)$-action centralized by an involution appear in
the family parameterized by $\mathcal M$. For the K3-surface
constructed as 
the 2:1 cover of $\mathbb P_2$ ramified along $\mathrm {Hess}(C_\mathrm{Klein})$
we clearly have $Y=Y_\mathrm{min}=\mathbb P_2$ for both $H$ and $L_2(7)$.
For $X_\mathrm{KM}$ the quotient $Y=Y_\mathrm{Klein}$ is an
$L_2(7)$-minimal model, whereas
$\mathbb P_2$ is the minimal model with respect to the action of $H$.
%
%
%
%
\begin {thebibliography}{XXX}
\bibitem [F] {F}
Kristina Frantzen, \emph{On K3-surfaces with finite symmetry groups of
  mixed type}, Dissertation, Ruhr-Universit\"at Bochum, expected 2008.
\bibitem [FH] {FH}
Kristina Frantzen and Alan Huckleberry,
\emph{On K3-surfaces with symplectic symmetry centralized by an involution}, in preparation.
\bibitem[KM] {KM}
J{\'a}nos Koll{\'a}r and Shigefumi Mori, \emph{Birational geometry of algebraic
  varieties}, Cambridge Tracts in Mathematics, vol.~134, Cambridge University
  Press, Cambridge, 1998.
\bibitem [Ma] {Ma}
Yuri~I. Manin, \emph{Cubic forms: algebra, geometry, arithmetic}, North-Holland
  Publishing Co., Amsterdam, 1974, North-Holland Mathematical Library, Vol.~4.
\bibitem [Mu] {M}
Shigeru Mukai, \emph{Finite groups of automorphisms of {$K3$} surfaces and the
  {M}athieu group}, Invent. Math. \textbf{94} (1988), no.~1, 183--221.
\bibitem [Na] {Na} 
Noboru Nakayama, \emph{Classification of Log del Pezzo surfaces of
  Index Two}, J. Math. Sci. Univ. Tokyo \textbf{14} (2007), 293--498.
\bibitem [N1] {N} 
Viacheslav~V. Nikulin, \emph{Finite groups of automorphisms of {K}\"ahlerian
  {$K3$} surfaces}, Trans. Moscow Math. Soc. \textbf{38} (1979), no.~2, 71--135.%
\bibitem [N2] {AN} 
Viacheslav~V. Nikulin, \emph{Factor groups of groups of automorphisms
  of hyperbolic forms with respect to subgroups generated by
  $2$-reflections. Algebrogeometric applications}, J. Soviet
Math. \textbf{22} (1983), 1401--1476.
\bibitem [OZ] {OZ} 
Keiji Oguiso and De-Qi Zhang, \emph{The simple group of order 168 and {$K3$}
surfaces}, Complex geometry (G\"ottingen, 2000), Springer, Berlin,
2002, 165--184, \href{http://arxiv.org/abs/math/0011259}{ArXiv 0011259}.
\bibitem [Y1] {Y1}
Ken-Ichi Yoshikawa, \emph{$K3$ surfaces with involution, equivariant
  analytic torsion, and automorphic forms on the moduli space},
Invent. Math. \textbf{156} (2004), no.~1, 53--117.
\bibitem [Y2] {Y2}
Ken-Ichi Yoshikawa, \emph{$K3$ surfaces with involution, equivariant
  analytic torsion, and automorphic forms on the moduli space II: a
  structure theorem}, University of Tokyo, Graduate School of
Mathematical Sciences, \href{http://kyokan.ms.u-tokyo.ac.jp/users/preprint/pdf/2007-12.pdf}{preprint}, 2007.
\bibitem [Z1] {Z}
De-Qi Zhang, \emph{Automorphisms of K3 surfaces}, Proceedings of the International Conference on Complex
              Geometry and Related Fields, AMS/IP
              Stud. Adv. Math. \textbf{39} (2007), 379--392, \href{http://arxiv.org/abs/math/0506612}{ArXiv 0506612}.
\bibitem [Z2] {Z2}
De-Qi Zhang, \emph{Quotients of K3 surfaces modulo involutions},
Japan. J. Math.(N.S.), \textbf{24} (1998). no. 2, 335-366, \href{http://arxiv.org/abs/math/9905193}{ArXiv 9905193}.

\end{thebibliography}
\end{document}